\tikzstyle{startstop} = [rectangle, rounded corners, minimum width=5cm, minimum height=1cm, text centered, draw=black, fill=gray!20]
\tikzstyle{process} = [rectangle, minimum width=5cm, minimum height=1cm, text centered, draw=black, fill=orange!20]
\tikzstyle{decision} = [diamond, aspect=2, minimum width=5cm, minimum height=1cm, text centered, draw=black, fill=green!20]
\tikzstyle{io} = [trapezium, trapezium left angle=70, trapezium right angle=110, minimum width=5cm, minimum height=1cm, text centered, draw=black, fill=blue!20]
\tikzstyle{arrow} = [thick,->,>=stealth]
	\numberwithin{equation}{section}
	\newtheorem{theorem}{Theorem}[section]
	\newtheorem{proposition}[theorem]{Proposition}
	\newtheorem{definition}[theorem]{Definition}
	\newtheorem{example}{Example}[section]
	\newtheorem{corollary}[theorem]{Corollary}
	\newtheorem{remark}[theorem]{Remark}
	\newenvironment{keywords}
    {\footnotesize \begin{center}\textbf{Keywords}\end{center}\vspace{-0.2cm}}
    {}
    \newcommand{\killpunct}[1]{}
 \title{Structural identifiability of linear-in-parameter parabolic PDEs through auxiliary elliptic operators}
	\date{\today}
	\author[1*]{Yurij Salmaniw}
	\author[1,2*]{Alexander P Browning}
	\affil[1]{Mathematical Institute, University of Oxford, Oxford, United Kingdom}
    \affil[2]{School of Mathematics and Statistics, The University of Melbourne, Melbourne, Australia}
\begin{document}

\maketitle

	\vfill
	\renewcommand{\abstractname}{Abstract}
	\begin{abstract}\noindent
        Parameter identifiability is often requisite to the effective application of mathematical models in the interpretation of biological data, however theory applicable to the study of partial differential equations remains limited. We present a new approach to structural identifiability analysis of fully observed parabolic equations that are linear in their parameters. Our approach frames identifiability as an existence and uniqueness problem in a closely related elliptic equation and draws, for homogeneous equations, on the well-known Fredholm alternative to establish unconditional identifiability, and cases where specific choices of initial and boundary conditions lead to non-identifiability. While in some sense pathological, we demonstrate that this loss of structural identifiability has ramifications for practical identifiability; important particularly for spatial problems, where the initial condition is often limited by experimental constraints. For cases with nonlinear reaction terms, uniqueness of solutions to the auxiliary elliptic equation corresponds to identifiability, often leading to unconditional global identifiability under mild assumptions. We present analysis for a suite of simple scalar models with various boundary conditions that include linear (exponential) and nonlinear (logistic) source terms, and a special case of a two-species cell motility model. We conclude by discussing how this new perspective enables well-developed analysis tools to advance the developing theory underlying structural identifiability of partial differential equations.
	\end{abstract}
	\vfill
 \begin{keywords}
    structural identifiability, partial differential equations, spectral theory of compact operators, Fredholm alternative, distinguishability
 \end{keywords}
 \vfill

\clearpage
\section{Introduction}

Mathematical models are now routine in the interpretation of biological data. Model parameters, in particular, provide an objective means of characterising behaviours that cannot be directly measured \cite{Liepe:2014,Gabor:2015dr}. Effective application of models for such characterisation, in addition to data extrapolation and prediction, often requires that parameters are identifiable \cite{Cobelli:1980,Raue:2009,Chis:2011}. Specifically, the \textit{structural identifiability} of a model is an assessment of whether model parameters can ever be identified from a given model and observation structure \cite{Bellman.1970,Cobelli:1980,Walter:1981,Walter:1987}.

Mathematically, the question of structural identifiability is to ask whether the map from model parameters to model outputs is injective \cite{Walter:1987,Renardy.2022}. If this injectivity holds only within a small neighbourhood of any given set of parameters, the model may be classified as \textit{locally structurally identifiable}, else the terminology \textit{globally structurally identifiable} is commonly used \cite{Xia.2003,Renardy.2022}. Succinctly, models are classified as non-identifiable if multiple distinct parameter sets give rise to indistinguishable model outputs (\cref{fig1}). For example, the reaction-diffusion equation \label{refc1}
    \begin{equation}
       u_t = d_1 u_{xx} + c_1 u,\quad u(0,t) = u(1,t) = 0,
    \end{equation}
admits the solution 
\begin{equation}\label{eq:linear_rd_sol}
    u(x,t) = \mathrm{e}^{\mu(c_1,d_1) t} \sin(\pi x),\quad \mu(c_1,d_1) = c_1 - d_1 \pi^2,
\end{equation}
where $d_1$ and $c_1$ denote the diffusivity and exponential growth rate, respectively.  As all parameter sets that satisfy $\mu(c_1,d_1) = \text{const.}$ give rise to an identical solution (see \cref{fig1}b), we classify such a model as non-identifiable from the initial condition $u(x,0) = \sin(\pi x)$. Tools for the assessment of structural identifiability are now well established for ordinary differential equation (ODE) models \cite{Bellu:2007,Raue:2009,Chis:2011,Barreiro.2023hg}, and are, perhaps, most pertinent in scenarios where only partial observations of the state space are made; for example, in a biochemical system where only one of a number of reactants is experimentally observable \cite{Bellman.1970}. Methods include those based on differential algebra, in which assessment is drawn from a so-called input-output relation: a higher order set of differential equations treated as polynomial in the derivatives of the observed state space \cite{Ljung:1994,Margaria.2001,Bellu:2007,Meshkat:2014}. Recent decades have seen a proliferation of methodologies based on differential-algebra \cite{Meshkat:2014}, as well as Taylor series, generating series and Lie derivatives \cite{Chis:2011,Ligon.2017qw}, and similarity transforms \cite{Vajda:1989,Chis:2011}.

\begin{figure}[!b]
	\centering
	\includegraphics{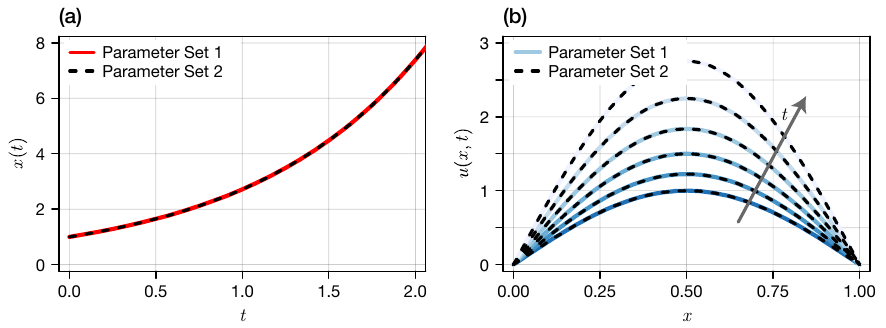}
	\caption[Figure 1]{Structural non-identifiability in a (a) ordinary- and (b) partial differential equation. Shown are indistinguishable solutions from two distinct parameter sets. In (a) we show the first component of $X$ subject to $\dot X = M_1 X$ (red solid) and $\dot X = M_2 X$ (black dashed) where $M_1 = (\begin{smallmatrix}1 & 0\\ 0 & 1\end{smallmatrix})$,  $M_2 = (\begin{smallmatrix}2 & -1\\ 1 & 0\end{smallmatrix})$, and $X(0) = (1,1)^\intercal$. In (b), we show the solution a linear reaction-diffusion equation (given by \cref{eq:linear_rd_sol}) at $t \in \{0,0.4,0.8,1.2,1.6,2\}$ for $(c_1,d_1) = (1,0.05)$ (blue solid) and $(c_1,d_1) \approx (2,0.15)$ (black dashed). Full details are given in Sections \ref{secODEs} and \ref{dirichlet} for the ODE and PDE models, respectively.}
	\label{fig1}
\end{figure}

Common to all general methodologies is that assessment is made based on the model structure itself, such that the existence or availability of an analytical solution is not required. Furthermore, questions of structural identifiability are typically assessed without regard to specific initial conditions or specific regions of parameter space: it is theoretically possible for structurally identifiable models to lose this property under certain constrains \cite{Saccomani.2003}. A trivial example is to consider a system initialised at equilibrium, such that the temporal dynamics are not observed. The literature draws for ODE models a clear distinction between so-called \textit{single-} and \textit{multi-experiment} identifiability, the latter in reference to the case that multiple distinct initial conditions are required to gain the ability to distinguish between parameter regimes \cite{Ligon.2017qw,Ovchinnikov.2022}. While there exists only very limited theory linking the question of structural identifiability to that of \textit{practical identifiability}---which asks whether models are identifiable from a finite and noisy set of experimental data---one might expect that experiments initiated near regimes that lose structural identifiability would yield comparatively less information.

Despite an increasing prevalence of spatiotemporal data, tools and associated theory for the assessment of structural identifiability in partial differential equation (PDE) models have only very recently become available \cite{Renardy.2022,Browning.2024d0n,Ciocanel.2024,Byrne.2025}. In our recent work \cite{Browning.2024d0n}, we extend the differential algebra approach to a relatively general system of linear reaction-diffusion equations. This approach converts an infinite-dimensional PDE into an algebraic system of equations, where each distinct component of the equation is treated as its own variable (e.g., $u =: p_1$, $\Delta u =: p_2$, etc.) This is a relatively accessible approach; however, it requires linear independence between these new variables, which may not always hold. Interestingly, this loss of linear independence (i.e., through eigenvalues and eigenfunctions of an operator) is precisely what allows us to construct solutions such as \eqref{eq:linear_rd_sol}. More recent work by Byrne et al. \cite{Byrne.2025} provides an extended framework for the assessment of structural identifiability in a more general class that includes nonlinear PDE models. There remains, however, little theory underpinning structural identifiability analysis of PDE models. In particular, analogies between highly studied properties of PDE models---such as the existence and uniqueness of solutions---and identifiability have not yet been drawn. In contrast, methodology developments for ODE are in no small part attributable to more generally applicable theory: Lie algebras, derivatives, and symmetries, for example \cite{Chis:2011}. In the present work, we seek to develop an approach that is accessible from an analytical point of view, and can be utilised with a graduate-level understanding of modern PDE theory.

Many questions relating to the identifiability of PDE models fundamentally differ from that of ODE models. While observations of ODE models are typically assumed to be temporal observations of one or more system states (or linear combinations thereof), it is not always the case that observations of spatial processes are themselves spatial. For example, many routine cell biology experiments are inherently two-dimensional, but reported observations are typically either scalar or otherwise spatially averaged (cell counts or density profiles, for example) \cite{McInerney.2019rs,Simpson:2020}. Furthermore, several considerations relating to the initial condition are unique to the spatial problem. In our previous work, we demonstrate that even specifying a parametric form for the observed initial condition within a partially observed spatial system can have unintended consequences for conclusions relating to identifiability \cite{Browning.2024d0n}. Finally, experiments that yield spatiotemporal data are often much more constrained than for those interpretable using ODE models: experimentalists are often practically limited to a very small number of spatially inhomogeneous initial conditions. Altogether, consideration of the initial condition and the constraints thus imposed is of high importance for the identifiability analysis of PDE models. 

In this work, we deviate from the more general techniques considered in previous work \cite{Browning.2024d0n,Byrne.2025} to study identifiability in a class of parabolic PDE models subject to one or a small number of initial conditions. Our approach is deliberately aligned with highly developed existing theory for analysis of PDEs. Specifically, we phrase the question of identifiability as an existence and uniqueness problem in a closely related elliptic PDE. We consider homogeneous parabolic equations of the form 
    \begin{align}\label{eq:pde_general}
        u_t = L u + f(x,u;B),
    \end{align}
in some prescribed spatial domain $\Omega \subset \mathbb{R}^n$ and subject to a specified boundary condition $\mathcal{B}u = 0$, where $L = L[A_1] (\cdot)$ is a linear, second order, uniformly elliptic operator depending on a parameter point $A_1$, and $f(x, u) = f(x, u; B_1)$ satisfies $f(x,0; B_1 ) = 0$ for all possible parameter points $B_1$. We always assume that the domain $\Omega$ has a smooth (e.g., $C^2$) boundary to avoid technicalities with the regularity of the solutions obtained. However, we note that this restriction can be weakened significantly while retaining the approach's validity. The boundary condition $\mathcal{B}$ may include the homogeneous Dirichlet, Neumann, or Robin conditions or periodic conditions; see Section \ref{sec:homogeneous_pde}. We note, however, that this approach is not limited to these choices. We assume that the boundary operator does not depend on an unknown parameter for simplicity. The parameter point $A_1$ may incorporate, for example, diffusion rates, advection rates, or growth rates. The parameter point $B_1$ may include growth rates, competition rates, or Allee effects; in both cases, the specific parameters included in the points $A_1$ and $B_1$ depend on the model under consideration. \label{refc2}

The only other requirement is that $L$ and $f$ are linear in their parameters in the sense that
	\begin{equation*}
		L[ A_1 ]u + L [ A_2 ]u = L[ A_1 + A_2 ] u \, ; \quad \quad f(x,  u;  B_1 ) + f( x,  u; B_2 ) = f( x,  u ; B_1 + B_2),
	\end{equation*}
for any parameter points $A_i$, $B_i$, $i=1,2$,  for all inputs $(x,u)$. To be clear, we do not require that models are also linear in the state variable. For example, the  classical  logistic growth model with $ f(x,u;B_1) = f(u, B_1  ) := a_1 u - b_1 u^2$ where $B_1 = (a_1,b_1)  \in \mathbb{R}^+ \times \mathbb{R}^+  $ can be studied using our approach.   We highlight that models of the form \cref{eq:pde_general} are routinely used in the interpretation of biological data; for example, in the context of cell proliferation and migration \cite{Murray:2002,Maini.2004rud,Swanson.2011}. 

At the most fundamental level, the approach we present here is to assume that there are two distinct solutions for a fixed set of parameters, and then to derive a contradiction (see Figure \ref{fig2}). The challenge is to determine when such a contradiction can be obtained. The rest of the paper is dedicated to developing this procedure. We begin in \Cref{secODEs} by drawing an analogy to linear algebra to demonstrate structural identifiability in linear systems of ODEs with constant coefficients. In \Cref{seclPDEs} we extend this analysis to linear PDE models of the form given in \cref{eq:pde_general} with $f(x,u ; B ) \equiv 0$. It is in \Cref{sec:general_framework} that we present formal definitions for what we term \textit{parameter distinguishability} and \textit{model identifiability}, and in which we outline the methodology to establish identifiability using properties of the elliptic operator $L$. This methodology is applied to the homogeneous linear parabolic PDE model in \cref{dirichlet,otherbc} for Dirichlet, Neumann, Robin, and periodic boundary conditions. Next, we extend the analysis to scalar models with state nonlinearity in $f(x,u ; B )$ and systems of equations in \cref{sec:inhomogeneous_pde}. Finally, we demonstrate the ramifications of our analysis on the practical identifiability of the linear model in \cref{sec:practical} before concluding with a discussion and outlining future research directions in \cref{sec:discussion}.

\section{Identifiability of fully observed systems of linear ODEs}\label{secODEs}

We begin by considering the structural identifiability of a fully observed system of ODEs subject to a single initial condition. While straightforward, this example carries two clear analogues with the PDE problem. The first is direct and arises by considering that a finite-difference representation of a linear PDE will result in a corresponding system of linear ODEs. We draw more closely on the second in \Cref{seclPDEs}, by viewing the coefficient matrix that characterises the ODE system more generally as an elliptic linear operator.

Consider an $n$-dimensional, linear, homogeneous ODE system of the form
\begin{align}\label{eq:linear_homogeneous_ODE_proto}
    \begin{cases}
        \dot{X} = M_1 X, \quad t > 0, \cr
        X(0) = X_0,
    \end{cases}
\end{align}
where $X = (x_1(t), x_2(t), \ldots, x_n(t))^\intercal$, $X_0 \in \mathbb{R}^n$ is an initial condition, and $M_1 \in \mathbb{R}^{n \times n}$ is an $n\times n$ matrix of real-valued coefficients  which we refer to as a \textit{parameter point} in the space $\mathbb{R}^{n \times n}$; see Definition \ref{def:parameters} for a precise definition in the context of PDE identifiability.  The matrix $M_1$ contains (up to) $n^2$ parameters that we wish to identify from a given observation of the trajectory $X(t)$. We are concerned with the following question: when is it the case that there exists another matrix of coefficients, say $M_2 \in \mathbb{R}^{n \times n}$, such that $X(t)$ satisfies simultaneously $\dot{X} = M_1 X$ and $\dot{X} = M_2 X$? If such a matrix exists, we refer to $M_1$ and $M_2$ as \textit{indistinguishable} with respect to $X_0$. We then conclude that model \eqref{eq:linear_homogeneous_ODE_proto} is non-identifiable from the initial condition $X_0$. We define precisely indistinguishability for PDE models in Definition \ref{def:parameters}. First, we consider the following Proposition.

\begin{proposition}\label{prop:ODE_prop_1}
    Fix $M_1 \in \mathbb{R}^{n \times n}$ and suppose $M \in \mathbb{R}^{n \times n}$ satisfies the following:
\begin{enumerate}[label=\roman*.)]
    \item $M$ is singular and nontrivial;
    \item $M$ commutes with $M_1$, that is, $M_1 M = M M_1$.
\end{enumerate}
Then, for any initial condition $X_0 \in \ker (M)$, $X(t)$ simultaneously solves \eqref{eq:linear_homogeneous_ODE_proto} with  parameter point  $M_1$ and
\begin{align}
    \begin{cases}
        \dot{X} = M_2 X, \quad t>0, \nonumber \\
        X(0) = X_0,
    \end{cases}
\end{align}
where $M_2 := M_1 + M$. In particular, the matrix $M_2$ encodes possible parameter point combinations that are indistinguishable from the point $M_1$ with respect to the initial condition $X_0$.
\end{proposition}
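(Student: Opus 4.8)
The plan is to exploit the explicit solution of the linear system and reduce the claim to a single invariance statement, namely that the trajectory never leaves $\ker(M)$. Since \eqref{eq:linear_homogeneous_ODE_proto} with parameter point $M_1$ has the unique solution $X(t) = \mathrm{e}^{M_1 t} X_0$, it suffices to show that $M X(t) = 0$ for all $t \geq 0$. Indeed, once this is established, we have $M_2 X(t) = (M_1 + M) X(t) = M_1 X(t) + M X(t) = M_1 X(t) = \dot{X}(t)$, so the same $X(t)$ solves the $M_2$-system, and it trivially satisfies the shared initial datum $X(0) = X_0$ by construction. Thus the entire proposition collapses to verifying the invariance $M X(t) \equiv 0$.

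For that verification I would carry out the following steps in order. First, write the matrix exponential as its defining series $\mathrm{e}^{M_1 t} = \sum_{k=0}^{\infty} (M_1 t)^k / k!$. Second, use hypothesis (ii), $M M_1 = M_1 M$, to conclude by induction on $k$ that $M$ commutes with each power $M_1^k$, and hence, passing to the (absolutely convergent) series, that $M$ commutes with $\mathrm{e}^{M_1 t}$ for every $t$. Third, apply this to $X_0$: since $X_0 \in \ker(M)$ by hypothesis, we obtain
\begin{equation*}
    M X(t) = M\, \mathrm{e}^{M_1 t} X_0 = \mathrm{e}^{M_1 t} M X_0 = \mathrm{e}^{M_1 t} \cdot 0 = 0,
\end{equation*}
for all $t \geq 0$, which is precisely the required invariance. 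The role of hypothesis (i) is only to guarantee that $\ker(M)$ is nontrivial (so that admissible initial conditions $X_0 \neq 0$ exist) while $M \neq 0$ (so that $M_2 \neq M_1$, making the two parameter points genuinely distinct and the conclusion nonvacuous).

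An equivalent and perhaps more conceptual route, which I would mention as a remark, is to observe directly that commutativity makes $\ker(M)$ an invariant subspace for the flow: if $Mv = 0$ then $M(M_1 v) = M_1 (M v) = 0$, so $M_1$ maps $\ker(M)$ into itself, and any trajectory of $\dot{X} = M_1 X$ launched from $\ker(M)$ remains there for all time. This phrasing foreshadows the PDE setting of \Cref{seclPDEs}, where $M_1$ is replaced by the elliptic operator and the invariant ``kernel'' direction becomes an eigenfunction.

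I do not expect a serious obstacle here, as the result is essentially a commutativity argument. The only point requiring genuine care is the interchange of $M$ with the infinite series defining $\mathrm{e}^{M_1 t}$; this is justified by absolute convergence of the exponential series in any matrix norm, so the finite-dimensional setting removes any analytic subtlety. The conceptually important step, rather than the technically hard one, is recognising that commutativity is exactly the algebraic condition encoding that the perturbation $M$ acts trivially along the chosen trajectory, which is the feature later generalised through spectral properties of $L$.
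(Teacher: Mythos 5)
Your proof is correct and follows essentially the same route as the paper's: both rest on the explicit solution $X(t) = \mathrm{e}^{M_1 t} X_0$, the fact that commutativity of $M$ with $M_1$ lets $M$ pass through the matrix exponential, and the kernel condition $M X_0 = 0$. The only differences are cosmetic—you isolate the invariance statement $M X(t) \equiv 0$ before substituting into the $M_2$-equation, whereas the paper adds and subtracts $M \mathrm{e}^{M_1 t} X_0$ directly in the computation of $\dot{X}$—and you spell out the series justification for $M \mathrm{e}^{M_1 t} = \mathrm{e}^{M_1 t} M$, which the paper takes as known.
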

\begin{proof}
    The fundamental solution of problem \eqref{eq:linear_homogeneous_ODE_proto} can be written in its matrix-exponential form:
    $$
X(t) = \mathrm{e}^{M_1 t} X_0 .
    $$
    We may differentiate with  respect  to $t$ and perform some elementary calculations to obtain:
    \begin{align}
        \dot{X} = M_1 \mathrm{e}^{M_1 t} X_0 & =  (M_1 + M ) \mathrm{e}^{M_1 t} X_0 - M \mathrm{e}^{M_1 t} X_0 \nonumber \\
        & =  (M_1 + M) X - \mathrm{e}^{M_1 t} M X_0 \nonumber \\
        & =  (M_1 + M ) X,
    \end{align}
    where the second line is obtained using the commutativity of $M_1$ and $M$, and the third line uses that $X_0$ belongs to the kernel of $M$.
\end{proof}

\begin{remark}
    We note that necessary and sufficient conditions for the structural identifiability of model \eqref{eq:linear_homogeneous_ODE_proto} have been established; see \cite{Stanhope2014}.
\end{remark}

The following is an elementary example to demonstrate Proposition \eqref{prop:ODE_prop_1}.

\begin{example}\label{example_1}
    Suppose $n=2$ and that $X$ solves \eqref{eq:linear_homogeneous_ODE_proto} with $M_1$ given by
    $$
    M_1 := \begin{pmatrix}
        2 & 3 \\ 1 & 4
    \end{pmatrix}
    $$
    and the initial condition $X_0$ is to be determined. The general solution is given by $x_1(t) = c_1 \mathrm{e}^{5t} - 3 c_2 \mathrm{e}^t$ and $x_2(t) = c_1 \mathrm{e}^{5t} + c_2 \mathrm{e}^t$, where $c_1$, $c_2$ are constants determined by the initial condition.
    
Then, the set of matrices that commute with $M_1$ is given precisely by
    $$
    M := \begin{pmatrix}
        a & 3b \\ b & a + 2b 
    \end{pmatrix},
    $$ 
   for any $a,b \in \mathbb{R}$. To ensure $M$ is singular, we restrict $a,b$ such that $a(a + 2b) - 3b^2 = 0$. If we fix $x_2(0) \in \mathbb{R}$ and choose $x_1(0) = - \tfrac{3b}{a} x_2(0)$, the singularity of $M$ ensures that
   $$
\begin{pmatrix}
    x_1(0) \\ x_2(0)
\end{pmatrix} :=
\begin{pmatrix}
    - \tfrac{3b}{a} \\ 1
\end{pmatrix} x_2(0)
   $$
   belongs to the kernel of $M$. Hence, Proposition \ref{prop:ODE_prop_1} applies and $X(t)$ solves both
   $\dot{X} = M_1 X$ and $\dot{X} = M_2 X = (M_1 + M)X$. 
\end{example}

In the procedure above, we have effectively reduced the question of structural identifiability of linear constant-coefficient ODE models to an algebraic problem  related  to whether the initial condition is in the kernel of some matrix. This leads to distinct  parameter points , $M_1$ and $M_2 := M_1 + M$, which produce identical output in the form of $X(t)$, and hence are indistinguishable in the sense that perfect observation of the solution trajectory $X(t)$ from $X_0$ does not allow us to distinguish between the  points  $M_1$ and $M_1 + M$. We provide an additional example of non-identifiability in two-dimensional ODE models in \cref{fig1}a.

This exercise motivates a generalisation to infinite-dimensional PDE models that depend linearly on their parameters. In particular, given a linear parabolic PDE in standard form (i.e., so that the coefficient of the time derivative is set to $1$), we can reduce the question of \textit{distinguishability of parameters in a parabolic model} into a question of \textit{existence and uniqueness of solutions to an auxiliary elliptic equation}. The existence and uniqueness of solutions to this new problem provide significant information on whether parameters are or are not structurally identifiable.  In fact, just as we lose distinguishability in the ODE system when $X_0$ lies in the kernel of the matrix $M$, so too can we lose distinguishability in the PDE setting when the initial condition $u_0(x)$ belongs to the kernel of a differential operator $L$—a direct analogy that informs our subsequent approach. This is precisely the analogy we pursue in Section 3, where we show how parameter distinguishability in a parabolic PDE depends on the existence of nontrivial kernels of an associated elliptic operator.  We begin with the case of homogeneous parabolic equations.

\section{Identifiability of linear parabolic PDE models}\label{seclPDEs}

\subsection{The general framework}\label{sec:general_framework}

Motivated by the example shown in Section \ref{secODEs}, we adapt this approach to the analysis of homogeneous parabolic PDEs that are linear in their parameters. More precisely, we consider a model of the form
\begin{align}\label{eq:pde_general2}
\begin{cases}
        u_t = L u , \hspace{2.15cm} \text{ for } \quad (x,t) \in \Omega \times (0,T), \cr
        u(x,0) = u_0 (x), \hspace{1cm} \text{ for } \quad x \in \overline{\Omega}
\end{cases}
\end{align}
in some prescribed spatial domain $\Omega \subset \mathbb{R}^n$, where $L = L[A_1] (\cdot)$ is a linear, second order, uniformly elliptic operator depending on a parameter point $A_1 \in \mathbb{R}^{n+2}$, and $T>0$ is some terminal time.  We consider only bounded domains with smooth boundary $\partial \Omega$ for simplicity while noting that this is not a prerequisite to apply this approach.  We restrict the parameters of $L$ to $(n+2)$ dimensions, as this fully encodes a second order linear operator with constant linear isotropic diffusion $d \in (0,\infty)$ (one-dimensional), drift $b \in \mathbb{R}^n$ ($n$-dimensional), and/or low-order terms $c \in \mathbb{R}$ (one-dimensional), see \cref{elliptic_operator}. The following terminology will be useful.

\begin{definition}\label{def:parameters}
We define a \textup{parameter point} as any element in $(n+2)$-dimensional space, distinguishing this from a \textup{parameter set}, which denotes a subset of $(n+2)$-dimensional space consisting of multiple parameter points. The term \textup{parameter domain} refers to the valid region within parameter space, as determined by specific model constraints.
\end{definition} 
The concept of \textit{parameter domain} is most readily understood when one considers diffusion rates, which are assumed nonnegative. This will remove some redundancy of parameters through, e.g., symmetry of the equation in subsequent arguments. In what follows, we will identify all distinguishable and indistinguishable points for linear and nonlinear scalar equations, and therefore subsets of the parameter domain for which models are or are not structurally identifiable, see Proposition \ref{prop:distinguish_identifiable}.

As will become apparent, parameter distinguishability, and therefore structural identifiability, will depend intimately on the particular initial condition of the problem. Therefore, we will henceforth denote by
$$
u(x,t) = X[A_1,u_0] (x,t)
$$ 
the flow generated by model \eqref{eq:pde_general2} corresponding to the parameter point $A_1 = (d_1, b_1, c_1)$ and initial condition $u_0(x)$. Strictly speaking, one must equip problem \eqref{eq:pde_general2} with, e.g., a valid boundary operator so that the problem is well-posed. To maintain the current focus, we will save such technical details for Section \ref{sec:homogeneous_pde} and simply refer to the solution of problem \eqref{eq:pde_general2} in the definitions of distinguishability and identifiability to follow. Given two parameter points $A_1$, $A_2$, due to symmetry we will without loss of generality always assume that $d_1 \geq d_2$ (otherwise, reverse the roles of $A_2$ and $A_1$). 

This motivates the following definition of \textit{indistinguishable} parameter points, and naturally \textit{distinguishable} and \textit{unconditionally distinguishable} parameter points.
\begin{definition}\label{def:distinguish}
    We call two distinct parameter points $A_1$, $A_2$ \textup{indistinguishable with respect to $u_0$} if $X[ A_1, u_0] (x,t) = X[ A_2, u_0] (x,t)$ for all $(x,t) \in \Omega \times (0,T)$. Otherwise, we say they are \textup{distinguishable with respect to $u_0$}. If two parameter points are distinguishable with respect to $u_0$ for any $u_0 \not\equiv 0$ we say they are \textup{unconditionally distinguishable}. By \textup{non-identifiable solution}, we refer to a solution $X[A,u_0](x,t)$ that can be obtained from indistinguishable parameter points.
\end{definition}


We now make explicit the connection between parameter distinguishability and model identifiability. Adhering to existing literature, we adopt the following definitions of identifiability \cite{Stanhope2014}. First, one may consider identifiability from \textit{some} initial condition.
\begin{definition}\label{def:identifiable_3}
    Model \eqref{eq:pde_general2} is identifiable in a subset $\Gamma \subset \mathbb{R}^{n+2}$ if and only if, for all $A_1$, $A_2$ belonging to $\Gamma$, $A_1 \neq A_2$, there exists $u_0$ such that $X[A_1, u_0] (\cdot,\cdot) \not\equiv X[A_2, u_0] (\cdot, \cdot)$.
\end{definition}
When we write $X[A_1, u_0] (\cdot,\cdot) \not\equiv X[A_2, u_0] (\cdot, \cdot)$ above, this is understood in the sense that there exists some $(x_0,t_0) \in \Omega \times (0,T)$ so that $X[A_1, u_0] (x_0,t_0) \neq X[A_2, u_0] (x_0,t_0)$.  In this case, one hopes to conclude that the space of possible initial conditions that lead to parameter identifiability is large.  This definition is not very restrictive, and in fact leads to identifiability in all models we consider here; however, this definition is often less desirable in practice as it implicitly assumes fine control over the initial condition, which is not often the case (for example, often only cell confluence, and not geometry, in two-dimensional cell migration scratch assays can be varied \cite{Jin:2016}).


For PDE models, it is appropriate to specify identifiability from \textit{a particular} initial condition as in Definition \ref{def:distinguish}.
\begin{definition}\label{def:identifiable_1}
    Model \eqref{eq:pde_general2} is identifiable in a subset $\Gamma \subset \mathbb{R}^{n+2}$ from $u_0$ if and only if, for all $A_1$, $A_2$ belonging to $\Gamma$, $A_1 \neq A_2$ implies that $X[A_1, u_0] (\cdot,\cdot) \not\equiv X[A_2, u_0] (\cdot, \cdot)$.
\end{definition}

When this holds for all nontrivial initial conditions, we may obtain \textit{unconditional identifiability}. 
\begin{definition}\label{def:identifiable_2}
    Model \eqref{eq:pde_general2} is unconditionally identifiable in $\Gamma \subset \mathbb{R}^{n+2}$ if and only if, for all $A_1$, $A_2$ belonging to $\Gamma$, $A_1 \neq A_2$ implies that for each $u_0 \not\equiv 0$, $X[A_1, u_0] (\cdot,\cdot) \not\equiv X[A_2, u_0] (\cdot, \cdot)$.
\end{definition}

{From the definitions above, we observe that concepts of identifiability relate closely to our concept of distinguishability. There is, however, a subtle difference: identifiability concerns entire subsets of parameter space for which solutions differ somewhere in spacetime, whereas distinguishability reduces to a comparison of two particular parameter points. More precisely, we can say the following, each of which follows directly from their respective definitions.
\begin{proposition}\label{prop:distinguish_identifiable}
    Define $\Theta = \Theta(u_0)$ to be the following collection of pairs of distinguishable parameter points for a fixed initial condition $u_0$:
    $$
\Theta (u_0) := \{ A_1, A_2 \in \mathbb{R}^{n+2} : A_1, A_2 \text{ are distinguishable with respect to } u_0 \}.
    $$
    Then the following hold.
    \begin{enumerate}[label=\roman*.)]
        \item If, for every pair of parameter points $A_1$, $A_2$ there exists a $\tilde u_0$ such that $A_1, A_2 \in \Theta (\tilde u_0)$, then model \eqref{eq:pde_general2} is identifiable in $\Gamma = \mathbb{R}^{n+2}$ in the sense of Definition \ref{def:identifiable_3}. Conversely, if model \eqref{eq:pde_general2} is identifiable in the sense of Definition \ref{def:identifiable_3}, then there always exists some $\tilde u_0$ such that any pair of points $A_1$, $A_2$ are distinguishable.
        \item Model \eqref{eq:pde_general2} is identifiable in $\Gamma = \Theta(u_0)$ in the sense of Definition \ref{def:identifiable_1}. Conversely, if model \eqref{eq:pde_general2} is identifiable from $u_0$ in the sense of Definition \ref{def:identifiable_1}, then any pair of points $A_1, A_2 \in \Gamma$ are distinguishable with respect to $u_0$.
        \item Define $\tilde \Gamma := \cap_{u_0} \Theta (u_0)$, where the intersection is taken over all compatible initial conditions for model \eqref{eq:pde_general2}. Then model \eqref{eq:pde_general2} is unconditionally identifiable in $\tilde \Gamma$ in the sense of Definition \ref{def:identifiable_2}. Conversely, if model \eqref{eq:pde_general2} is unconditionally identifiable in a subset $\Gamma$, then points belonging to $\Gamma$ are unconditionally distinguishable. 
    \end{enumerate}
\end{proposition}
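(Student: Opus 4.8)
The plan is to treat each of the three parts as a biconditional and to establish both directions by directly unwinding the definitions of distinguishability (Definition \ref{def:distinguish}) together with the three notions of identifiability (Definitions \ref{def:identifiable_3}, \ref{def:identifiable_1}, and \ref{def:identifiable_2}); no analysis of the flow $X[A,u_0]$ itself is required beyond what those definitions already encode. The first thing I would do is fix the notational bridge between the pairwise object $\Theta(u_0)$ and the subsets $\Gamma \subset \mathbb{R}^{n+2}$ appearing in the identifiability definitions: membership of a pair $\{A_1,A_2\}$ in $\Theta(u_0)$ is, by construction, synonymous with the assertion that $A_1,A_2$ are distinguishable with respect to $u_0$, i.e.\ $X[A_1,u_0] \not\equiv X[A_2,u_0]$. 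This single atomic statement is the building block out of which all three identifiability notions are assembled, so the proof amounts to matching quantifier structures.

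For part i.), the forward direction reads off directly from the hypothesis: given any pair $A_1 \neq A_2$, the assumed $\tilde u_0$ with $\{A_1,A_2\} \in \Theta(\tilde u_0)$ witnesses $X[A_1,\tilde u_0] \not\equiv X[A_2,\tilde u_0]$, which is exactly the existential clause of Definition \ref{def:identifiable_3} taken over $\Gamma = \mathbb{R}^{n+2}$. The converse reverses the implication: identifiability in the sense of Definition \ref{def:identifiable_3} supplies, for each pair, a witnessing initial condition, and that witness places the pair in the corresponding $\Theta$. Parts ii.) and iii.) proceed by the same translation, but with the quantifier on $u_0$ frozen or universally closed. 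For ii.) I would set $\Gamma = \Theta(u_0)$ and observe that membership \emph{is} the definition of distinguishability with respect to $u_0$, hence precisely Definition \ref{def:identifiable_1}, with the converse the same equivalence read backwards. For iii.) the set $\tilde\Gamma = \cap_{u_0}\Theta(u_0)$ collects exactly those pairs distinguishable with respect to every admissible $u_0$, that is, the unconditionally distinguishable pairs of Definition \ref{def:distinguish}, which is verbatim the content of Definition \ref{def:identifiable_2}.

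The only genuine points of care, and where I expect the main (modest) obstacle to lie, are bookkeeping rather than analytic. First, in part i.) I must respect the quantifier order: Definition \ref{def:identifiable_3} permits a \emph{different} witnessing $u_0$ for each pair, so I should phrase the converse as ``for each pair there exists $\tilde u_0$'' and take care not to overclaim a single universal initial condition that distinguishes all pairs simultaneously. Second, I would state explicitly the identification of $\Theta(u_0)$, a collection of pairs, with a subset of parameter space, so that the phrase ``identifiable in $\Gamma = \Theta(u_0)$'' is unambiguous; the cleanest device is to let $\Gamma$ be any set of points all of whose pairwise comparisons lie in $\Theta(u_0)$, so that Definition \ref{def:identifiable_1} applies pointwise. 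Finally, in part iii.) the intersection is taken over \emph{compatible} initial conditions only, so I would note that $X[A,u_0]$ must be well-defined for each $u_0$ in the index set (per the well-posedness deferred to Section \ref{sec:homogeneous_pde}); once this index set is pinned down, all three equivalences close with no further input.
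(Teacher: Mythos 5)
Your proposal is correct and takes essentially the same approach as the paper, which offers no written proof at all but asserts that each claim ``follows directly from their respective definitions''---precisely the quantifier-matching you carry out. Your bookkeeping remarks (per-pair witnesses in part i.), reading $\Theta(u_0)$ as constraining pairwise comparisons, and the compatibility of the index set in part iii.) are exactly the points the paper leaves implicit or addresses informally in the surrounding text, so nothing further is needed.
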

 In Proposition \ref{prop:distinguish_identifiable} iii.), we refer to all ``compatible" initial conditions. This depends on the specific model considered and is left intentionally vague. For example, this should include nonnegativity if the quantity measured does not achieve negative values (e.g., a population density). It may also assume that the initial condition satisfies the prescribed boundary condition so that the solution is continuous up to time $t=0$. Generally speaking, compatibility here means the problem is well-posed for those initial conditions. \label{refc5}

 Definitions \ref{def:parameters}-\ref{def:identifiable_2} develop  into a useful distinction as it is instructive to first identify and describe distinguishable points, and then to describe entire sets of distinguishable points, leading to identifiability. Proposition \ref{prop:distinguish_identifiable} paired with results of Section \ref{sec:homogeneous_pde} will allow one to conclude the following. 
\begin{itemize}
    \item Model \eqref{eq:pde_general2} is always identifiable in the sense of Definition \ref{def:identifiable_3};
    \item Model \eqref{eq:pde_general2} is identifiable in $\Gamma = \mathbb{R}^{n+2}$ from a wide class of initial conditions in the sense of Definition \ref{def:identifiable_1}, and those initial conditions that lead to non-identifiability correspond to elements of the kernel of an auxiliary elliptic equation (see Theorem \ref{thm:main_theorem_1});
    \item Model \eqref{eq:pde_general2} is never unconditionally identifiable in $\Gamma = \mathbb{R}^{n+2}$ the sense of Definition \ref{def:identifiable_2}; equivalently, there always exists some initial condition for which model \eqref{eq:pde_general2} is not identifiable.
\end{itemize}
}


Consequently, we are less concerned about the subset of parameter space alone, and more concerned about how the initial and boundary conditions interact with the model parameters, leading to indistinguishable points. By looking at properties of the elliptic operator $L[A]$ from \eqref{eq:pde_general2}, and its degenerate counterpart $L_0[A]$ (see \cref{eq:degenerate_L}), as they depend on auxiliary parameter points $A$ (see Section \ref{sec:homogeneous_pde}), we can identify precisely all combinations of indistinguishable parameter points. We seek to describe this set of combinations of parameter points. \label{refc6}

Given a subset $\mathcal{A} \subset [0,\infty) \times \mathbb{R}^{n+1}$, it will be useful to introduce the following equivalence relation on $[0,\infty) \times \mathbb{R}^{n+1}$ between parameter points:
\begin{align}\label{eq:equiv_relation}
    A_1 \sim A_2 \quad \text{ if and only if } \quad A:= A_1 - A_2 \in \mathcal{A} .
\end{align}
In this way, we will derive a correspondence between \textit{indistinguishability of parameter points} and \textit{equivalence of parameter points} with respect to the equivalence relation \eqref{eq:equiv_relation}: if $A_1$ and $A_2$ are equivalent in the sense of \eqref{eq:equiv_relation}, they may be indistinguishable under certain conditions. If we can exhaustively identify this set $\mathcal{A}$, then the quotient space
\begin{align}\label{eq:identifiable_set}
    \mathcal{R} := [0,\infty) \times \mathbb{R}^{n+1} / \mathcal{A} 
\end{align}
will provide a description of pairs of parameter points which are guaranteed to be distinguishable. Note that $A_1$, $A_2$ belonging to $\mathcal{R}$ is equivalent to $A_1 \not\sim A_2$. We now explain how equivalence of parameter points may lead to indistinguishability, but not necessarily so. \\

Our approach can be broken into several key steps. A flow chart diagram for Steps 1-3 can be found in Figure \ref{fig2}. Suppose, for concreteness, that the operator $L$ depends on a parameter point $(a, b) \in \mathbb{R}^2$.
\begin{enumerate}[label=Step \arabic*:]
    \item Assume, to derive a possible contradiction, that there exists a smooth solution $u(x,t)$ solving  problem  \eqref{eq:pde_general2} for two distinct parameter points $A_1:= (a_1, b_1)$, $A_2:=(a_2, b_2)$. Take the difference between the two equations solved by $u$ and use the linearity-in-coefficients property to obtain an auxiliary \textit{elliptic} partial differential equation depending on the auxiliary parameter point $A := (a_1 - a_2, b_1 - b_2)$.
    \item Partition the auxiliary parameter domain into two subsets, which we denote by $\mathcal{A}$ and $\mathcal{R}$. $\mathcal{A}$ will contain all parameter points such that the auxiliary elliptic equation has a nontrivial solution; $\mathcal{R}$ will contain those points for which there is no nontrivial solution (zero is always a solution). The set $\mathcal{R}$ encodes parameter-point combinations that are distinguishable. The set $\mathcal{A}$ \textit{may} lead to indistinguishable parameter points, but not necessarily so. 
    \item Partition $\mathcal{A}$ once more into the sets $\mathcal{A}_{\textup{NI}}$ ($\textup{NI}$ for \textit{not identifiable}) and $\mathcal{A}_{\textup{I}}$ ($\textup{I}$ for \textit{identifiable}). $\mathcal{A}_{\textup{NI}}$ is comprised of parameter points from $\mathcal{A}$ for which there exists a solution solving the original time-dependent problem \textit{and} the auxiliary elliptic equation simultaneously; $\mathcal{A}_{\textup{I}}$ is then given by $\mathcal{A} \setminus \mathcal{A}_{\textup{NI}}$. Parameters belonging to $\mathcal{A}_{\textup{I}}$ are distinguishable; parameters belonging to $\mathcal{A}_{\textup{NI}}$ lead to indistinguishability. 
\end{enumerate}
At this stage, assuming one can execute each step of the procedure outlined above, we will have fully described the nature of identifiability for model \eqref{eq:pde_general2} via Proposition \ref{prop:distinguish_identifiable}. One may introduce an optional fourth step where one further subdivides $\mathcal{A}_{\textup{NI}}$ based on some practical considerations. For example, if one is measuring a population density, we expect that solutions will remain nonnegative. One might then subdivide $\mathcal{A}_{\textup{NI}}$ further as follows:
\begin{enumerate}[label=Step \arabic*:]
  \setcounter{enumi}{3}
  \item Partition $\mathcal{A}_{\textup{NI}}$ into $\mathcal{A}_{\textup{NI}}^+$ and $\mathcal{A}_{\textup{NI}}^-$, parameters for which solutions are nonnegative and those which are negative somewhere, respectively. 
\end{enumerate}
Due to the generality of our approach, we obtain many indistinguiable points stemming from a sign-changing initial condition; one may then safely ignore such cases via a compatibility condition, in this case, non-negativity. \\

\begin{figure}[!t]
\begin{center}
    \begin{tikzpicture}[node distance=1.5cm, scale=0.8, transform shape, every node/.style={font=\normalsize}]

\tikzstyle{startstop} = [rectangle, rounded corners, text centered, draw=black, fill=gray!20]
\tikzstyle{process} = [rectangle, rounded corners, text centered, draw=black, fill=blue!20]
\tikzstyle{decision} = [diamond, aspect=2, text centered, draw=black, fill=cyan!20]
\tikzstyle{arrow} = [->,>=stealth]

\node (start) [startstop, minimum width=4cm, text width=10cm] {Assume existence of smooth solution \( u \) solving \cref{eq:pde_general2} for two distinct parameter points \( A_1 \) and \( A_2 \)};

\node (process1) [process, below of=start, yshift=0.25cm, minimum width=5cm, text width=10cm] {Compute difference between the two equations};

\node (process2) [process, below of=process1, yshift=0.25cm, minimum width=5cm, text width=10cm] {Obtain auxiliary elliptic PDE with parameter point \( A = A_1 - A_2 \)};

\node (decision1) [decision, below of=process2, yshift=-1.0cm, minimum width=4cm, text width=3.5cm] {Does the auxiliary elliptic PDE have a nontrivial solution?};

\node (process3) [process, below of=decision1, xshift=-4cm, yshift=-1cm, minimum width=4.5cm, text width=4.5cm] {Parameter point $A$ belongs to \( \mathcal{A} \)};

\node (process4) [process, below of=decision1, xshift=4cm, yshift=-1cm, minimum width=4.5cm, text width=4.5cm] {Parameter point $A$ belongs to \( \mathcal{R} \) ($A_1, A_2$ are distinguishable)};

\node (decision2) [decision, below of=process3, xshift =4cm, yshift=-1cm, minimum width=4cm, text width=3.5cm] {Does \( u \) also solve the original time-dependent problem?};

\node (process5) [process, below of=decision2, xshift=-4cm, yshift=-1cm, minimum width=5cm, text width=4.5cm] {Parameter point $A$ belongs to \( \mathcal{A}_{\textup{NI}} \) \\ ($A_1, A_2$ are 
 indistinguishable)};

\node (process6) [process, below of=decision2, xshift=4cm, yshift=-1cm, minimum width=5cm, text width=4.5cm] {Parameter point $A$ belongs to \( \mathcal{A}_{\textup{I}} \) \\ ($A_1, A_2$ are 
 distinguishable)};

\draw [arrow] (start) -- (process1);
\draw [arrow] (process1) -- (process2);
\draw [arrow] (process2) -- (decision1);
\draw [arrow] (decision1) -| node[anchor=south east, xshift=-0.2cm, yshift=0.2cm] {Yes} (process3);
\draw [arrow] (decision1) -| node[anchor=south west, xshift=0.2cm, yshift=0.2cm] {No} (process4);
\draw [arrow] (process3) -- (decision2);
\draw [arrow] (decision2) -| node[anchor=south east, xshift=-0.2cm, yshift=0.2cm] {Yes} (process5);
\draw [arrow] (decision2) -| node[anchor=south west, xshift=0.2cm, yshift=0.2cm] {No} (process6);

\end{tikzpicture}
\end{center}
\caption{A flow chart diagram outlining Steps 1-3 described in Section \ref{sec:general_framework}.}\label{fig2}
\end{figure}
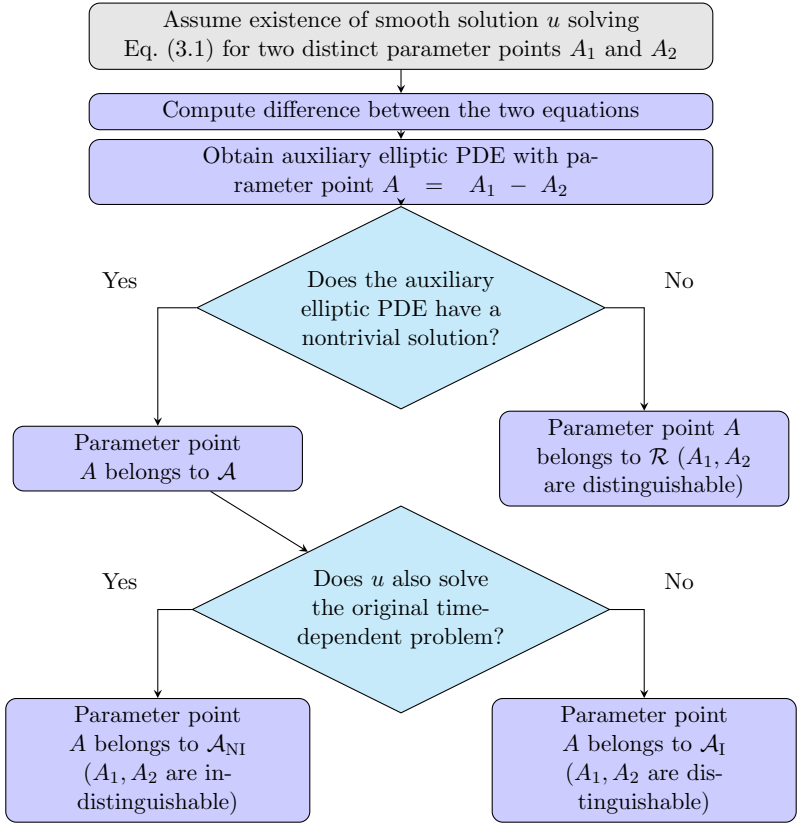

In fact, for a fixed boundary operator of Dirichlet, Neumann, Robin type, or periodic type, we can identify potentially indistinguishable parameter points by examining the set $\mathcal{A}$ given by
\begin{align}
    \mathcal{A} = \left\{ A \in [0,\infty) \times \mathbb{R}^{n+1} : \ker ( L[A] ) \neq \{ 0 \} \right\} ,
\end{align}
where $\ker ( L[A] )$ denotes the set of all functions $\psi$ belonging to the domain of $L$ such that $L[A] \psi = 0$, and $L$ is defined in \cref{elliptic_operator} below. $\mathcal{A}$ is often non-empty; we will provide examples to show this explicitly. Importantly, we emphasize that the kernel of $L$ depends intimately on which boundary condition is chosen. Equivalently, the boundary operator restricts the domain of definition for $L$ and, hence, restricts valid elements of the kernel. The domain of $L$ shrinks, therefore broadening the space of initial conditions for which model \eqref{eq:pde_general2} is identifiable in the sense of Definition \ref{def:identifiable_1}.

In the following subsections, we perform these steps in detail to expose how one can understand identifiability versus non-identifiability. In Section \ref{sec:homogeneous_pde}, we provide a complete description of identifiability for homogeneous models (i.e., $f \equiv 0$). As will be made clear, this depends closely on the boundary conditions of the PDE itself. Therefore, we treat in detail four cases one typically encounters: a homogeneous Dirichlet, Neumann, and Robin condition, as well as the periodic boundary condition. In Section \ref{sec:inhomogeneous_pde}, we then explore the case of a truly nonlinear right hand side that is linear in its parameters, focusing on the logistic model. We conclude Section \ref{sec:inhomogeneous_pde} by applying the results for the scalar homogeneous equation to construct nonidentifiable solutions to a two-species cell motility model.

\subsection{Linear, homogeneous parabolic equations}\label{sec:homogeneous_pde}

For convenience, we will denote by $L = L[A]$ the elliptic operator
\begin{align}\label{elliptic_operator}
    L[A] := d \Delta + b \cdot \nabla + c
\end{align}
for $A \in [0,\infty) \times \mathbb{R}^{n+1}$. For all $d>0$, $L[A]$ is a uniformly elliptic operator; when $d=0$, the operator $L$ degenerates and we write 
\begin{align}\label{eq:degenerate_L}
   L_0 [A] := b \cdot \nabla + c. 
\end{align} 
We also write $L_1[A]:= d \Delta + b \cdot \nabla$ to denote the operator without low order terms.

We begin with the following homogeneous equation with constant coefficients
\begin{align}\label{eq:mainPDE}
    \begin{cases}
        u_t = L[A_1]u := d_1 \Delta u + b_1 \cdot \nabla u + c_1 u,\quad  \text{ for } \quad (x,t) \in \Omega \times (0,T), \cr 
        \mathcal{B} u = 0, \hspace{3.1cm} \text{ for } \quad (x,t) \in \partial \Omega \times (0,T) , \cr
        u(x,0) = u_0(x), \hspace{1.75cm} \text{ for }\quad x \in \overline{\Omega},
    \end{cases}
\end{align}
in a smooth, bounded domain $\Omega \subset \mathbb{R}^n$. We denote by $T$ some terminal time, typically $T=+ \infty$, or otherwise the terminal time of an experiment, by $d_1>0$ the diffusion coefficient, by $b_1 \in \mathbb{R}^n$ a drift coefficient, and by $c_1 \in \mathbb{R}$ a constant growth/decay rate. For simplicity, we will assume that $u_0$ satisfies $\mathcal{B} u_0 = 0$ as a compatibility condition. We will consider in detail $\mathcal{B}$ to be any of the following:
\begin{enumerate}[label=\roman*.)]
    \item Dirichlet condition: $\mathcal{B} u := u$;
    \item Neumann condition: $\mathcal{B} u := \frac{\partial u}{\partial \nu}$;
    \item Robin condition: $\mathcal{B} u :=  u + \sigma \frac{\partial u}{\partial \nu}$ for real-valued constant $\sigma$.
    \item Periodic boundary conditions (toroidal geometry).
\end{enumerate}\label{list:boundary_conditions}
Here, $\partial / \partial \nu$ denotes the outward-facing unit normal vector to $\partial \Omega$. In this form, model \eqref{eq:mainPDE} has a parameter set $A:= (d, b, c) \in (0,\infty) \times \mathbb{R}^{n+1}$. 

We then have the following result, a precise execution of Steps 1-2 outlined in section \ref{sec:general_framework}.  
\begin{theorem}\label{thm:main_theorem_1}
The following hold.
\begin{enumerate}[label = \textup{(\alph*)}]
    \item Model \eqref{eq:mainPDE} is unconditionally identifiable in $\mathcal{R}$ in the sense of Definition \ref{def:identifiable_2}, where $\mathcal{R}$ is defined in \eqref{eq:identifiable_set}.
    \item Model \eqref{eq:mainPDE} is identifiable from $u_0$ in $\mathcal{A}$ if $X[A, u_0] (x,t) \not\in \ker (L[A])$ for some $t \in (0,T)$, for all $A \in \mathcal{A}$. In particular, model \eqref{eq:mainPDE} is identifiable from $u_0$ if $u_0 \not \in \ker (L[A])$ for all $A \in \mathcal{A}$. 
\end{enumerate} 
\end{theorem}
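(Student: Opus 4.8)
The plan is to execute Steps 1--2 of the general framework: assume that two distinct parameter points produce the same flow from some $u_0 \not\equiv 0$, subtract the two governing equations, and exploit linearity-in-parameters to collapse the question to whether the solution trajectory lives inside $\ker(L[A])$. Concretely, suppose $A_1 \neq A_2$ and $X[A_1,u_0] \equiv X[A_2,u_0] =: u$. Since $u$ satisfies $u_t = L[A_1]u$ and $u_t = L[A_2]u$ subject to the same boundary operator $\mathcal{B}u = 0$, subtracting and using $L[A_1] - L[A_2] = L[A_1 - A_2]$ gives
$$L[A]\,u(\cdot,t) = 0 \quad \text{for all } t \in (0,T), \qquad A := A_1 - A_2,$$
interpreted in the domain of $L$ fixed by $\mathcal{B}$ (note $u(\cdot,t)$ automatically satisfies $\mathcal{B}u(\cdot,t)=0$), so that $u(\cdot,t) \in \ker(L[A])$ for every $t$. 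Both assertions of the theorem are then read off from this single identity together with the trivial/nontrivial dichotomy defining $\mathcal{A}$ and $\mathcal{R}$.

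For part (a), take $A_1, A_2$ in distinct classes of $\mathcal{R}$, i.e.\ $A = A_1 - A_2 \notin \mathcal{A}$. By definition of $\mathcal{A}$ this forces $\ker(L[A]) = \{0\}$, whence $u(\cdot,t) \equiv 0$ on $(0,T)$. Using that the parabolic flow is continuous up to $t=0$ (afforded by the smoothness of $\partial\Omega$ and the compatibility condition $\mathcal{B}u_0 = 0$), I would pass to the limit $t \to 0^+$ to obtain $u_0 \equiv 0$, contradicting $u_0 \not\equiv 0$. Because $u_0$ was an arbitrary nontrivial initial datum, any such pair $A_1, A_2$ is unconditionally distinguishable, which is precisely Definition \ref{def:identifiable_2} on $\mathcal{R}$.

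For part (b), I would simply take the contrapositive of the same identity. If, for the given $u_0$ and some $A \in \mathcal{A}$, the trajectory escapes the kernel---$X[A,u_0](\cdot,t_0) \notin \ker(L[A])$ for some $t_0 \in (0,T)$---then no common solution can exist, so the corresponding $A_1, A_2$ with $A_1 - A_2 = A$ are distinguishable from $u_0$; ranging over all of $\mathcal{A}$ and invoking part (a) on the complementary set $\mathcal{R}$ yields identifiability from $u_0$ over the whole parameter domain in the sense of Definition \ref{def:identifiable_1}. The ``in particular'' clause follows from a continuity argument: $\ker(L[A])$ is a closed subspace and $t \mapsto L[A]u(\cdot,t)$ is continuous with $L[A]u(\cdot,0) = L[A]u_0 \neq 0$ whenever $u_0 \notin \ker(L[A])$, so the trajectory begins, and for small $t$ remains, outside the kernel.

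I expect the main obstacle to lie not in the logic but in the functional-analytic bookkeeping that makes the reduction rigorous: specifying the precise space on which $L[A]$ acts so that ``$\ker(L[A])$'' genuinely encodes the boundary operator $\mathcal{B}$, justifying the termwise subtraction of the two governing equations at the regularity actually available, and---most delicately---securing continuity of the flow up to $t=0$ so that the limit $u(\cdot,t)\to u_0$ is legitimate in part (a). These are exactly the points where the standing assumptions (smooth $\Omega$, compatibility of $u_0$ with $\mathcal{B}$) do the work; once they are in place, both claims are immediate consequences of the elliptic identity above.
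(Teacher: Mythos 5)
Your proposal is correct and follows essentially the same route as the paper's proof: subtract the two governing equations, use linearity-in-parameters to obtain $L[A_1-A_2]\,u(\cdot,t)=0$ for all $t$, and then read off (a) from the trivial-kernel case and (b) from the assumption that the trajectory (or, via continuity at $t=0$, the initial datum) leaves $\ker(L[A])$. The only cosmetic difference is that in part (a) you derive the contradiction against $u_0\not\equiv 0$ by sending $t\to 0^+$, whereas the paper contradicts $A_1\not\sim A_2$ by exhibiting a nontrivial kernel element; these are the same argument arranged contrapositively.
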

\begin{proof}
    Assume $u_0 \not\equiv 0$. We begin with part (a).
    
    Fix $A_1 \neq A_2$ in $\mathcal{R}$ and assume that there exists a nontrivial solution $u(x,t) = X[A_1, u_0](x,t) = X[A_2, u_0](x,t)$ satisfying \eqref{eq:mainPDE}. Taking the difference of these two equations and using the linearity-in-coefficients property of the operator $L$ we deduce that 
    $$
-(L[A_1]u - L[A_2]u) = - L(A_1 - A_2 ) X [ A_1, u_0 ] = - L(A_1 - A_2 ) X [ A_2, u_0 ] = 0.
    $$
    Thus, for any fixed $t_0 \in (0,T)$ (in fact, for all $t\in(0,T)$), $X[A_1, u_0] (x,t_0)$ is a nontrivial solution to $L[A]X = 0$ and so $X \in \ker (L[A])$ for $A := A_1 - A_2$. This implies that $A_1 \sim A_2$, a contradiction to their belonging to $\mathcal{R}$. This proves part (a).

    For part (b), we now fix parameter points $A_1$, $A_2$ such that $A_1 \sim A_2$ and assume that $u = X[A_1, u_0] = X[A_2, u_0]$ is a nontrivial solution to problem \eqref{eq:mainPDE} such that $u(x,t_0) \not\in \ker (L[A])$ for some $t_0 \in (0,T)$. Repeating the procedure of part (a), we find that there must hold
    $$
-L[A] u(x,t) = 0
    $$
    for all $t \in (0,T)$, and in particular, at $t_0$, a contradiction. In particular, by the continuity of the solution $X[A_i, u_0] (x,t)$ in time, we have that $\lim_{t \to 0^+} X[A_i, u_0] (x,t) = u_0(x)$ for $i=1,2$, and so $-L[A] u_0  = 0$, again a contradiction. This completes part (b).
\end{proof}
\begin{remark}
    Theorem \ref{thm:main_theorem_1} provides a sufficient but not necessary condition to guarantee identifiability of model \eqref{eq:mainPDE}. More precisely, if the initial condition belongs to the kernel, model \eqref{eq:mainPDE} may still be identifiable in $\mathcal{A}$. To see this, note that even if $A_1 \sim A_2$, it is not guaranteed that a solution exists with $\mathcal{B}u=0$ and
    $$
u_t - L[A_1] u = 0 = u_t - L[A_2] u,
    $$
    as we can only guarantee that $u_t - L[A_1] u = u_t - L[A_2] u$ from the argument above. This is why we introduce step 3 highlighted in Section \ref{sec:general_framework}.
\end{remark}

From Theorem \ref{thm:main_theorem_1}, we observe that the question of parameter identifiability for model \eqref{eq:mainPDE} can be turned into a question of identifying the elements of the kernel of the associated auxiliary elliptic operator. In general, this may not be so informative as we do not know a priori whether the kernel is finite dimensional. As it turns out, this can be ruled out for many cases of interest as there are existing results that tell us quite a bit of information about the kernel, which allow us to develop Theorem \ref{thm:main_theorem_1} further. We state the following, essentially the Fredholm alternative applied to the inverse of $L[A]$.

\begin{proposition}\label{prop:compact_operator_on_hilbert}
    Let $\Gamma \subset [0,\infty) \times \mathbb{R}^{n+1}$ denote the parameter space for which $L[A]$ is a Fredholm operator on a Hilbert space $\mathcal{H}$. Then for each fixed $A \in \Gamma$ there holds
\begin{enumerate}[label = \textup{(\alph*)}]
    \item the homogeneous problem $L[A] \phi = 0$ has only the trivial solution; otherwise
    \item the homogeneous problem $L[A] \phi = 0$ has a nontrivial solution.
\end{enumerate}
In the second case, $\dim ( \ker (L[A] ) ) < \infty$; in particular, $0$ is an eigenvalue of $L[A]$ and the dimension of the kernel of $L[A]$ corresponds to the geometric multiplicity of the zero eigenvalue.
\end{proposition}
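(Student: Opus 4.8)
The plan is to note first that the dichotomy between (a) and (b) is logically exhaustive---either $\ker(L[A]) = \{0\}$ or it is not---so the only substantive assertions are the finite-dimensionality of the kernel in case (b) and its identification with the geometric multiplicity of the zero eigenvalue. The latter is immediate from definitions: since $\ker(L[A]) = \ker(L[A] - 0\cdot I)$, the scalar $0$ is an eigenvalue precisely when this kernel is nontrivial, and its geometric multiplicity is by definition $\dim\ker(L[A])$. The real work therefore concentrates on establishing finiteness, and the natural route is through the spectral theory of compact operators advertised in the introduction.

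Concretely, I would realise $L[A]$ as a closed, densely defined operator on $\mathcal{H} = L^2(\Omega)$ whose domain $D(L)$ encodes the chosen boundary condition (for instance $H^2(\Omega)\cap H^1_0(\Omega)$ in the Dirichlet case). Because $\Omega$ is bounded with smooth boundary and $d>0$, the operator is uniformly elliptic, so for a real shift $\lambda$ chosen sufficiently large the sesquilinear form associated with $\lambda I - L[A]$ is coercive on the relevant form domain by a G\aa{}rding-type inequality. Lax--Milgram then furnishes a bounded inverse $R_\lambda := (\lambda I - L[A])^{-1}\colon \mathcal{H}\to D(L)$, and composing with the compact Rellich--Kondrachov embedding $D(L) \hookrightarrow \mathcal{H}$ shows that $R_\lambda$ is a compact operator on $\mathcal{H}$. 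This is the only place where ellipticity (equivalently $d>0$) is essential, and it is precisely why the degenerate operator $L_0[A]$ of \eqref{eq:degenerate_L} falls outside $\Gamma$: a pure transport operator gains no regularity and its resolvent is not compact.

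With compactness in hand I would convert the kernel problem into an eigenvalue problem for $R_\lambda$. For $\phi \in D(L)$ one has $L[A]\phi = 0$ if and only if $(\lambda I - L[A])\phi = \lambda\phi$, equivalently $\phi = \lambda R_\lambda\phi$, so $\phi$ is an eigenvector of $R_\lambda$ for the nonzero eigenvalue $1/\lambda$. The Riesz--Schauder theory of compact operators guarantees that the eigenspace for any nonzero eigenvalue is finite-dimensional, yielding $\dim\ker(L[A]) < \infty$, while the classical Fredholm alternative applied to $I - \lambda R_\lambda$ simultaneously delivers the dichotomy (a)/(b). I note that if one instead takes the hypothesis ``$L[A]$ is Fredholm'' entirely at face value, finite-dimensionality of the kernel is immediate from the definition of a Fredholm operator; the compact-resolvent argument is what justifies that hypothesis in the first place and is therefore the version I would record.

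The main obstacle I anticipate is purely in setting up the functional-analytic framework uniformly across the four boundary conditions. The drift term $b\cdot\nabla$ makes $L[A]$ non-self-adjoint, so one cannot invoke the spectral theorem and must rely on the general, non-self-adjoint Riesz--Schauder and Fredholm theory; moreover the sign of $c$ together with the first-order term can destroy coercivity, so the shift $\lambda$ must be chosen (after, for example, absorbing the drift via completing the square) large enough to restore it for each of the Dirichlet, Neumann, Robin, and periodic realisations. Once coercivity and the correct domain are secured for each case, the remaining steps are routine.
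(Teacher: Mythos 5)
Your proposal is correct and follows essentially the same route as the paper: the paper's proof is a one-line appeal to the Fredholm alternative (citing Brezis, Ch.~6), and your compact-resolvent/Riesz--Schauder argument is exactly the standard machinery behind that citation. Your observation that, taking the hypothesis ``$L[A]$ is Fredholm'' at face value, the finite-dimensionality of the kernel and the dichotomy are immediate from the definition is precisely the level at which the paper invokes the result.
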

\begin{proof}
    This is a direct consequence of the Fredholm alternative, see, e.g., \cite[Ch. 6]{brezis.2011}.
\end{proof}
\begin{remark}
    Any elliptic operator defined on an appropriate space can be extended to a Fredholm operator, e.g., when one can apply the spectral theory of compact operators \cite{kato.1980, Deimling.1985, brezis.2011}.
\end{remark}
The fact that the kernel is finite dimensional greatly reduces the possible number of initial conditions from which problem \eqref{eq:mainPDE} has indistinguishable parameter points. We can formulate Proposition \ref{prop:compact_operator_on_hilbert} in an equivalent way by isolating the lower order term $c$, which allows one to more clearly identify the set $\mathcal{A}$ that may lead to indistinguishability. 
\begin{corollary}\label{prop:compact_operator_on_hilbert_v2}
    Fix a parameter point $A \in [0,\infty) \times \mathbb{R}^{n+1}$. Then, $A \in \mathcal{A}$ if and only if $c = \lambda$, where $\lambda$ is any eigenvalue of
\begin{align}
    \begin{cases}
        -L_1[A] \phi = \lambda \phi , \quad \textup{ for } \quad x \in \Omega \\
        \mathcal{B}\phi = 0, \hspace{1.3cm} \textup{ for }\quad x \in \partial \Omega.
    \end{cases}
\end{align}
\end{corollary}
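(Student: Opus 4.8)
The plan is to recognise that this corollary is a direct reformulation of the defining property of $\mathcal{A}$, obtained by separating the zeroth-order coefficient $c$ from the operator $L[A]$. The entire content is an algebraic rearrangement of the kernel equation into an eigenvalue equation, so I do not expect any genuinely hard analytic step; the work is in checking that the rearrangement is posed on the correct space.

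First I would unpack the definition. By the definition of $\mathcal{A}$ given just before \cref{elliptic_operator}, the statement $A \in \mathcal{A}$ is equivalent to $\ker(L[A]) \neq \{0\}$, that is, the existence of a nontrivial $\phi$ in the domain of $L$ (equivalently, a nontrivial $\phi$ satisfying $\mathcal{B}\phi = 0$) with $L[A]\phi = 0$. Second, using the splitting $L[A] = L_1[A] + c$, where $L_1[A] = d\Delta + b\cdot\nabla$ carries no zeroth-order term, I would rewrite $L[A]\phi = 0$ as $L_1[A]\phi + c\phi = 0$, i.e. $-L_1[A]\phi = c\phi$. This exhibits $\phi$ as an eigenfunction of $-L_1[A]$, subject to $\mathcal{B}\phi = 0$, with eigenvalue exactly $c$. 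The forward implication is this computation read left-to-right, and the converse is the same computation read right-to-left; since every step is reversible, the ``if and only if'' is immediate, with $\lambda = c$.

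The one point requiring care, and the closest thing to an obstacle, is to confirm that passing from $L[A]$ to $-L_1[A]$ does not alter the function space on which the problem is posed. Because $c$ enters only as multiplication by a real constant, it changes neither the boundary operator $\mathcal{B}$ nor the domain of the operator; hence the kernel problem for $L[A]$ and the eigenvalue problem for $-L_1[A]$ live on the same space under the same boundary condition, and nontriviality of $\phi$ is preserved under the rearrangement. This is precisely why isolating $c$ is legitimate and why the domain of $L_1[A]$ inherits $\mathcal{B}\phi=0$.

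A final, minor remark worth recording is that, since $c \in \mathbb{R}$, only real eigenvalues of $-L_1[A]$ can satisfy $c = \lambda$; the equivalence is nonetheless exact, because a nontrivial real kernel element of $L[A]$ forces $c$ to coincide with such a real eigenvalue, and conversely. This formulation also dovetails with \cref{prop:compact_operator_on_hilbert}: once $L[A]$ is Fredholm on $\mathcal{H}$, the eigenvalues of $-L_1[A]$ form a discrete set, so the condition $c = \lambda$ singles out a thin (indeed countable) subset of admissible $c$, sharpening the description of $\mathcal{A}$.
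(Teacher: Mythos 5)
Your proposal is correct and matches the paper's (implicit) argument exactly: the paper presents this corollary as an immediate reformulation of \cref{prop:compact_operator_on_hilbert} obtained by ``isolating the lower order term $c$,'' which is precisely your rearrangement $L[A]\phi = 0 \iff -L_1[A]\phi = c\phi$ on the same domain with the same boundary operator. Your observation that the multiplication by the constant $c$ leaves the domain and boundary condition untouched is the only point of substance, and you have handled it correctly.
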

In Corollary \ref{prop:compact_operator_on_hilbert_v2}, much of the useful information is hidden in the fact that $A$ is fixed but arbitrary. We can reformulate this statement to connect more closely with eigenvalues of the operator $L$. First, we remark that the two distinct cases of Proposition \ref{prop:compact_operator_on_hilbert} allow one to identify subsets of $\mathcal{R}$ and $\mathcal{A}$. Denote
\begin{align}
     \widetilde{\mathcal{R}} := \left\{ A \in \Gamma : \text{Case 1. of Proposition \ref{prop:compact_operator_on_hilbert} holds}  \right\} \subset \mathcal{R} ; \\
     \widetilde{\mathcal{A}} := \left\{ A \in \Gamma : \text{Case 2. of Proposition \ref{prop:compact_operator_on_hilbert} holds}   \right\} \subset \mathcal{A}.
\end{align}
Ideally, these sets would agree; however, from the nature of the operator $L[A]$ and the conditions of Proposition \ref{prop:compact_operator_on_hilbert}, we cannot include the set $\{ 0 \} \times \mathbb{R}^{n+1}$ directly since the operator $L[A] = L_0(A)$ whenever $d=0$ and compactness is lost. Therefore, we must treat separately such degenerate cases. We now develop this idea fully for each of the four boundary operators introduced earlier.

\subsection{The Dirichlet boundary operator}\label{dirichlet}
In this section, we apply Theorem \ref{thm:main_theorem_1} and Propositions \ref{prop:compact_operator_on_hilbert}- \ref{prop:compact_operator_on_hilbert_v2} in one spatial dimension to identify fully the sets $\mathcal{A}$ and $\mathcal{R}$. 

To this end, set $\Omega = (0,\ell)$ for some length $\ell > 0$, and denote by $u(x,t)$ the unique solution to problem \eqref{eq:mainPDE} with nontrivial initial condition $u_0 \not\equiv 0$ with $\mathcal{B}u = u = 0$ at $x=0,\ell$. Note that if $d=0$ (i.e., if the diffusion rate is identifiable while other parameters are not), then from the Dirichlet boundary condition we find that either $b = c = 0$ or else $u_0 \equiv 0$, and so all parameters are identifiable. In particular, $\{ 0 \} \times \mathbb{R}^2 \subset \mathcal{R}$. 

\subsubsection{Case I: No drift.} 

\noindent\textbf{Steps 1-2:} For the purposes of demonstration, we first consider the simpler case
$d \frac{\partial^2}{\partial x^2} + c$ so that our parameter point is $A = (d,c) \in (0,\infty) \times \mathbb{R}$. It is well-known that the problem
\begin{align}
\begin{cases}
        - \frac{\partial^2 \phi}{\partial x^2} = \lambda \phi, \quad\quad \text{ for } \quad x \in (0,\ell) \nonumber \\
\phi = 0, \hspace{1.7cm} \text{ for } \quad x = 0,\ell
\end{cases}
\end{align}
has a discrete set of eigenvalue/eigenfunction pairs of the form
\begin{equation}\label{dirichlet_efuns}
(\phi_n , \lambda_n) = \left( \sin\left(\frac{n \pi x }{\ell} \right) , \left( \frac{n \pi }{\ell} \right)^2 \right), \quad n = 1, 2, \ldots .
\end{equation}
Therefore, by Proposition \ref{prop:compact_operator_on_hilbert_v2} we have that
$$
\mathcal{A} = \{ (d , c) \in (0,\infty) \times \mathbb{R} : c = d \lambda_n \} .
$$
By Theorem \ref{thm:main_theorem_1}, the model is unconditionally identifiable for all parameters belonging to
$$
\mathcal{R} = [0, \infty) \times \mathbb{R}^{n+1} / \mathcal{A} .
$$ 
Moreover, we have identifiability from any initial condition $u_0$ that is not a scalar multiple of an element of the kernel, namely, $\phi_n$ for some $n \in \mathbb{N}$. 

\noindent\textbf{Step 3:} With steps 1 and 2 complete, we may now identify particular solutions that lead to non-identifiability of some sets of parameters, which is to say, we identify solutions that solve both the original time-dependent problem and the auxiliary elliptic problem silmultaneously. To this end, consider pairs such that $A_1 := (d_1, c_1) \sim (d_2, c_2) = : A_2$. By construction, we have that
$$
\begin{cases}\label{eq:}
    d_1 - d_2 = d > 0; \cr
     c_1 - c_2 = d \lambda_n ,
\end{cases}
$$
for some $n \in \mathbb{N}$ fixed. By Theorem \ref{thm:main_theorem_1} we must consider initial conditions $u_0(x) \in \ker (L[A])$. In this case, we consider scalar multiples of $\phi_n$ for some $n \in \mathbb{N}$ fixed.

Since the problem is linear, we know that the solution can be written in the form $u(x,t) = \gamma(t) \phi(x)$ for some functions $\gamma$, $\phi$ via separation of variables. Moreover, since the parabolic problem enjoys uniqueness of solutions for a given initial condition $u_0$, this is the only solution, and we must have that $u_0(x) = \gamma(0) \phi_n (x)$. Through construction we have that for any $\gamma(t)$ there holds
$$
u_t - L[A_1] u = u_t - L[A_2] u ,
$$
but we do not yet know whether it is possible to also satisfy $u_t - L(A_i)u = 0$ for each $i=1,2$ as well. By substituting $u(x,t) = \gamma(t) \phi_n(x)$ into $u_t - L[A_1] u = 0$, we can solve for $\gamma(t)$ to obtain the solution
$$
u(x,t) = c_0 \mathrm{e}^{ (c_2 - d_2 \lambda_n ) t} \phi_n (x) = c_0 \mathrm{e}^{ (c_1 - d_1 \lambda_n ) t} \phi_n (x) ,
$$
for any $c_0 \in \mathbb{R}$ fixed. This solution satisfies
$$
u_t - L[A_1] u = u_t - L[A_2] u  = 0,
$$
subject to the initial condition $u(x,0) = c_0 \phi_n (x)$, and $u = 0$ at $x=0,\ell$. 

\noindent\textbf{Step 4:} Under the further assumption that the measured quantity is nonnegative, we remove all eigenfucntions for $n \geq 2$ and retain only the first $n=1$ which leads to indistinguishable parameter points corresponding to a positive solution.

In Figure \ref{fig3}, we display the set of lines in the parameter space $\mathbb{R}^2$ coming from $\mathcal{A}$. From Step 3, we obtain the red and blue lines; with the addition of Step 4, we remove all blue lines and retain only the red line (corresponding to the first eigenfunction, the only one which remains positive everywhere).

From this example, we see that the temporal component interacts with the spatial component in a rather pathological way, allowing one to obtain indistinguishable points, leading to non-identifiability of model \eqref{eq:mainPDE}. This is a scenario for which belonging to the set $\mathcal{A}$ (in the sense of the equivalence relation) yields non-identifiable solutions explicitly.

It is this model for which we demonstrate structural non-identifiability in \cref{fig1}. We consider a unit domain with $\ell = 1$ and an initial condition $u_0(x,0) = \phi_1(x) = \sin(\pi x)$. Taking $A_1 = (c_1,d_1) = (1,0.05)$, our methodology indicates that all other parameter combinations  $A_2 = (c_2,d_2)$ will be indistinguishable provided that $c_1 - c_2 = (d_1 - d_2)\pi^2 \Leftrightarrow c_1 - d_1 \pi^2 = c_2 - d_2 \pi^2$. In other words, the combination $c - d\pi^2$ is structurally identifiable, while the constituent parameters $c$ and $d$ are not.

		
		

 \begin{figure}
     \centering
     \begin{subfigure}[t]{.5\textwidth}
     \centering\captionsetup{width=.8\linewidth}
     \includegraphics[width=0.9\linewidth]{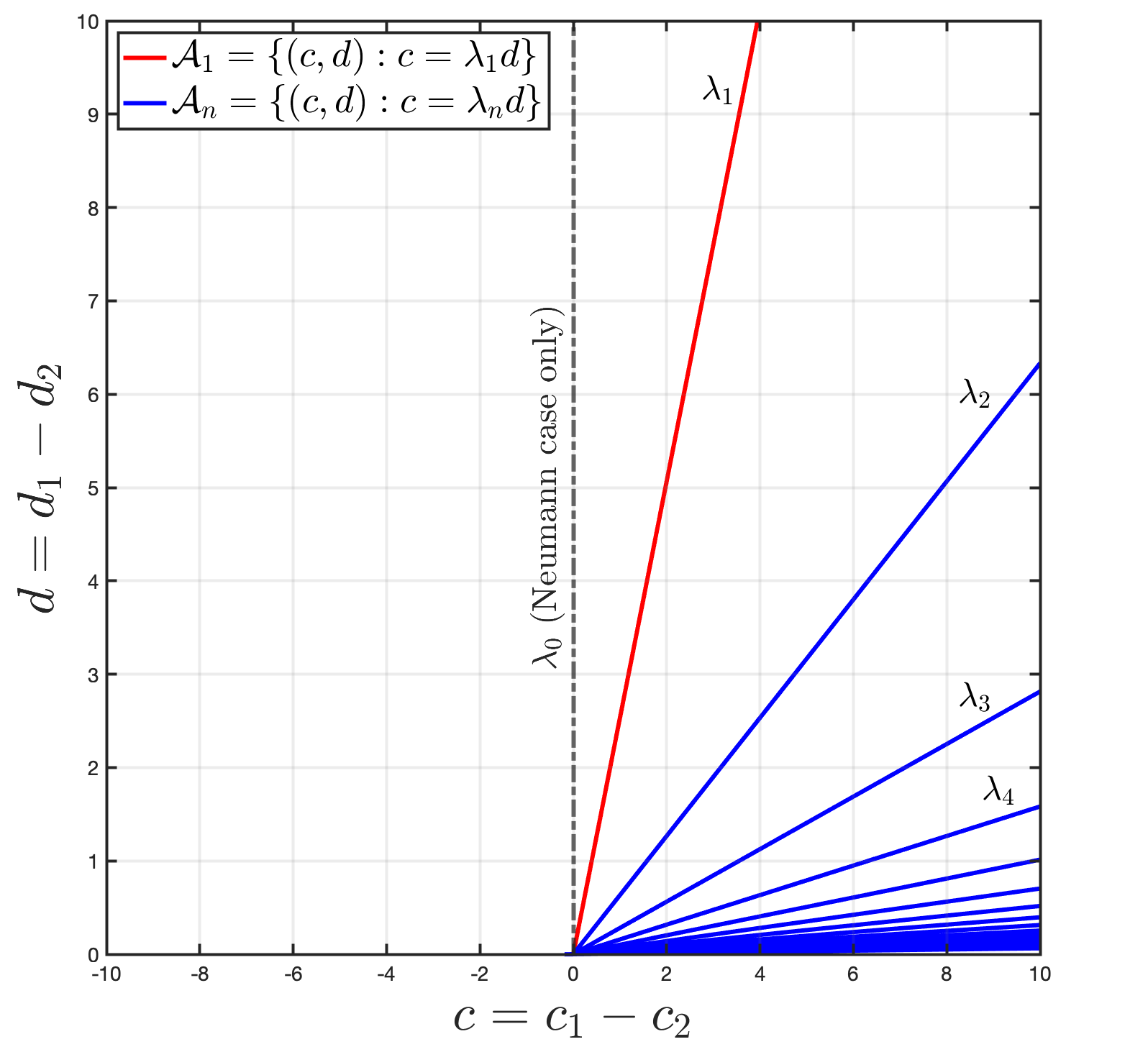}
     \caption{For the Dirichlet case, $\mathcal{A}$ is given by the red line and blue lines. The red line corresponds to the only eigenfunction which is positive everywhere in the domain. In the Neumann case, $\mathcal{A}$ is also comprised of countably many lines corresponding to each eigenvalue; however, the positive eigenfunction now corresponds to $\lambda_0 = 0$.}
     \label{fig3}
     \end{subfigure}%
     \begin{subfigure}[t]{.5\textwidth}
     \centering\captionsetup{width=.8\linewidth}
     \includegraphics[width=0.9\linewidth]{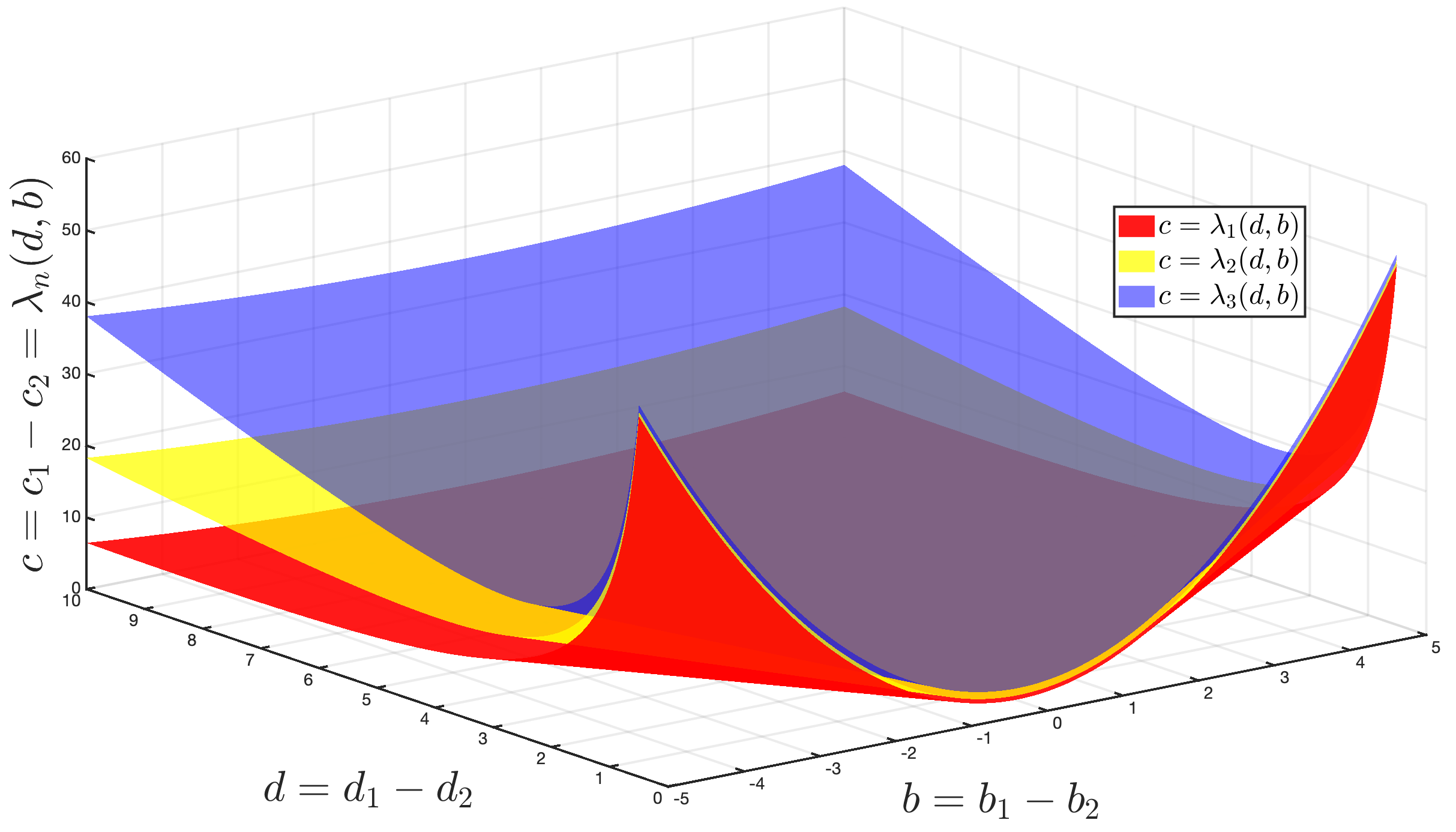}
     \caption{A depiction of the set $\mathcal{A}$ for the first three eigenvalues. The surfaces are given directly by $c$ as a function of $(d,b)$ via $c = \lambda_n (d,b)$. The red surface corresponds to the positive eigenfunction. When we take $b=0$, the cross-section recovers the red line in panel \textbf{(a)}. }
     \label{fig3_b}
     \end{subfigure}
     \label{fig3_comb}\caption{A depiction of the set $\mathcal{A}$ for the Dirichlet problem without drift (a) and with drift (b).}
 \end{figure}

\subsubsection{Case II: Constant drift.} 

\noindent\textbf{Steps 1-2:} For the general case $L[A] = d \partial^2 / \partial x^2 + b \partial / \partial x + c$, the relation between the coefficients and eigenvalues of the operator is more complicated. However, it is still possible to describe a large set of identifiable parameters, as well as construct non-identifiable solutions. First, we note that for any $d>0$ the problem
\begin{align}
\begin{cases}
        - d \frac{\partial^2 \phi}{\partial x^2} - b \frac{\partial \phi}{\partial x} = \lambda \phi, \quad\quad \text{ for } \quad x \in (0,\ell) \nonumber \\
\phi = 0, \hspace{3cm} \text{ for } \quad x = 0,\ell
\end{cases}
\end{align}
also enjoys a discrete set of eigenvalue/eigenfunction pairs of the form
$$
( \phi_n , \lambda_n (d,b) ) = \left(\mathrm{e}^{-b x/2d } \sin(\sqrt{\lambda_n} x), \frac{b^2}{4d} + d \frac{n^2 \pi^2}{\ell^2}  \right),
$$
from which one obtains the no-drift case by taking $b=0$. Unlike the $b=0$ case, however, we cannot describe the eigenvalue/eigenfunction pairs independent from the parameters $(d,b)$, i.e., $\lambda_n = \lambda(d,b)$ is the object we have to work with. This is different from the no drift case where we were able to abuse the homogeneity of $\lambda_n$, i.e., $\lambda_n(d) = d \lambda_n(1)$. 

\noindent\textbf{Step 3:} As in the no-drift case, by Proposition \ref{prop:compact_operator_on_hilbert_v2}, $L[A]$ has a nontrivial kernel if and only if $c = \lambda_n(d,b)$ for some $n$, and $\mathcal{A}$ is as in the no-drift case with new eigenvalues $\lambda_n(d,b)$:
$$
\mathcal{A} = \{ (d , b,  c) \in (0,\infty) \times \mathbb{R}^2 : c = \lambda_n (d,b) \} .
$$
Consider now fixed parameter points $A_1 \sim A_2$. This yields a system of equations:
\begin{align}
    \begin{cases}
        d_1- d_2 = d > 0 , \cr
        b_1 - b_2 = b \in \mathbb{R} ,  \cr
        c_1 - c_2 = \lambda_n (d,b) .
    \end{cases}
\end{align}
As in the no-drift case, separation of variables yields a solution of the form $u(x,t) = \gamma(t) \phi_n (x)$ satisfying $u_t - L[A_1] u = u_t - L[A_2] u$. Substituting this into $u_t - L[A_1] u = 0$, after some computation, yields the necessary relation
$$
0 = \left( \frac{\gamma^\prime (t)}{\gamma(t)} + \frac{d_1 \lambda_n (d,b) }{d} + c_1  \right) u + \left( b_1 d - d_1 b  \right) \frac{\gamma(t) (\phi_n)_x}{d}.
$$
From the second term on the right hand side, we observe a sufficient relation between the diffusion and drift to remove it from the equation:
$$
\frac{d_1}{d_2} = \frac{b_1}{b_2} ,
$$
which, in particular, indicates that for non-identifiable solutions to exist, the sign of $b_1$ and $b_2$ must be the same.  Using the remaining relation, we may then solve for $\gamma(t)$  as a first order, linear ordinary differential equation  to obtain the non-identifiable solution
$$
u(x,t) = c_0 \mathrm{e}^{ ( c_1 - d_1 \lambda_n (d_1 - d_2, b_1 - b_2) )t} \phi_n (x),
$$
which satisfies $u_t - L[A_1] u = u_t - L[A_2] u = 0$ and $u = 0$ at $x=0,\ell$. 

Comparing with the no-drift case, we again find that the spatial and temporal components must interact in a pathological way to lead to non-indentifiability; note, however, that in the second case we identify an additional constraint relating the ratio of diffusion coefficients with the ratio of drift coefficients. This is reasonably intuitive: indistinguishable parameters cannot be produced from drift pointing in opposite directions.

\noindent\textbf{Step 4:} We may now execute an optional Step 4. For example, suppose that our solution (i.e., our measurement) $u(x,t)$ is nonnegative. Then, we can safely remove \textit{all} non-identifiable solutions \textit{except for the first one}, which is to say, the non-identifiable solution corresponding to the first eigenfunction $\phi_1(x)$ (see the red surface of Figure \ref{fig3_b}). Indeed, it is the only eigenfunction which can be chosen nonnegative everywhere in the domain, and all other eigenfunctions change sign and therefore have an incompatible negative region.

\subsection{The Neumann, Robin \& periodic boundary conditions}\label{otherbc}

From the analysis of the Dirichlet problem of the previous section, we briefly discuss some other common boundary conditions. In each case, the set of parameters that may possibly lead to non-identifiability relates to the spectrum of the related elliptic eigenvalue problem.

\subsubsection{The Neumann condition.} 

First, we again note that if $d=0$ (i.e., if $d_1 = d_2$), then our auxiliary equation is of the form $- b \phi_x - c \phi = 0$ and solution are of exponential form. From the homogeneous Neumann condition, the only possibility is that $\phi$ is constant everywhere, and there must hold $c = c_1 - c_2 = 0$ while any $b \in \mathbb{R}$ is valid. We can understand this further as follows.

Intuitively, we anticipate that there is an issue with parameter distinguishability under a homogeneous Neumann boundary condition because any solution to the ordinary differential equation
$$
y^\prime(t) = c y
$$
will satisfy model \eqref{eq:mainPDE} when the initial condition is constant over space. From the viewpoint of Theorem \ref{thm:main_theorem_1} and Propositions \ref{prop:compact_operator_on_hilbert}-\ref{prop:compact_operator_on_hilbert_v2}, we can reach the same conclusion as follows. Consider the eigenvalue problem
\begin{align}
\begin{cases}
        - d \frac{\partial^2 \phi}{\partial x^2} - b \frac{\partial \phi}{\partial x} = \lambda \phi, \quad\quad \text{ for } \quad x \in (0,\ell) \nonumber \\
\frac{\partial \phi}{ \partial x} = 0, \hspace{2.75cm} \text{ for } \quad x = 0,\ell
\end{cases}
\end{align}
for $d>0$, $b \in \mathbb{R}$ fixed. One can again solve for the eigenvalue/eigenfunction pairs $(\phi_n , \lambda_n)$ for $n = 1, 2, \ldots$. They are again discrete and each lead to a curve of non-identifiable solutions in a particular parameter space exactly as in the Dirichlet case. However, there is also $\lambda_0 = 0$ as a legitimate eigenvalue with constant (nontrivial) eigenfunction. Then, since we obtain a non-identifiable solution from scalar multiples of members of the kernel, the set
$$
 \{ (d,b,c) \in [0,\infty) \times \mathbb{R}^2 : c = \lambda_0 = 0 \}
$$
is precisely $[0,\infty) \times \mathbb{R} \times \{ 0 \}$. In other words, for initial conditions $u_0$ spatially constant, the diffusion and drift coefficients $(d,b)$ are not identifiable \textit{anywhere} in $[0,\infty) \times \mathbb{R}$. Together, we identify the set
$$
\mathcal{A} = \{ (d,b,c) \in (0,\infty) \times \mathbb{R}^2 : c = \lambda_n (d,b),\, n \geq 1 \} \cup \{ (d,b,c) \in [0,\infty) \times \mathbb{R} \times \{ 0 \} \}.
$$

\subsubsection{The Robin condition.} Different from the Dirichlet and Neumann cases, the Robin condition allows for the possibility of non-identifiability when $d=0$. Indeed, an eigenvalue/eigenfunction pair $(\phi_0, \lambda_0 (d,\sigma) = (\mathrm{e}^{-x/\sigma}, b/\sigma)$ exists for any constant $\sigma \in \mathbb{R}$ coming from the boundary condition. We may then consider fixed parameter points $A_1 \sim A_2$ satisfying
\begin{align}
    \begin{cases}
        d_1- d_2 = 0 , \cr
        b_1 - b_2 = b \in \mathbb{R} ,  \cr
        c_1 - c_2 = \lambda_0 (d,b) = b/\sigma = (b_1 - b_2)/\sigma .
    \end{cases}
\end{align}
We may then substitute an ansatz of the form $u(x,t) = \gamma(t) \phi_0 (x)$ into $u_t - L_0 [A_1] u = 0$ to obtain a solution of the form
$$
u(x,t) = \mathrm{e}^{(d_1 / \sigma^2 - b_1 / \sigma + c_1)t} \phi_0 (x),
$$
which also solves $u_t - L_0 [A_2] u = 0$, and so $A_1$, $A_2$ are indistinguishable. 

When $d>0$, it is well-known that the Robin condition produces a spectrum between the Dirichlet and Neumann conditions (i.e., when $\sigma \to 0$, we recover the Dirichlet condition; when $\sigma \to \pm \infty$ we recover the Neumann condition). In particular, the associated eigenvalue problem has a discrete spectrum with $0 < \lambda_1 \leq \lambda_2 \leq \cdots$, and we can produce non-identifiable solutions exactly as in the Dirichlet or Neumann cases. Notice that for any $\sigma \neq 0$, $0$ is no longer an eigenvalue and so constant functions become initial conditions from which model \eqref{eq:mainPDE} is identifiable everywhere. 

Together, we identify the set 
$$
\mathcal{A} = \{ (d , b,  c) \in (0,\infty) \times \mathbb{R}^2 : c = \lambda_n (d,b) \} \cup \{ (d,b,c) \in \{ 0 \} \times \mathbb{R}^2 : c = b/\sigma \} .
$$

\subsubsection{The periodic condition.} It is also interesting to consider briefly the periodic boundary condition. For simplicity, we consider the no-drift case and we consider the domain to be a torus $\mathbb{T} := \mathbb{R}/ 2 \pi \mathbb{Z}$. Then the eigenvalues are simply $\lambda_n = n^2$ for $n \geq 0$; however, for $n\geq1$ the geometric multiplicity of each eigenvalue is $2$, given by the eigenfunctions $\phi_{n,1} = \cos (n x)$ and $\phi_{n,2} = \sin (nx)$. In the previous cases, we only obtain a non-identifiable solution from a single eigenfunction. This is because each eigenvalue was simple and therefore enjoys a unique eigenfunction (up to scalar multiplication). For the periodic boundary condition, each eigenvalue (aside from $\lambda_0$) has geometric multiplicity $2$, and therefore non-identifiable solutions are generally given by linear combinations of these eigenfunctions:
$$
u(x,t) = \gamma_1 (t) \phi_{n,1} (x) + \gamma_2 (t) \phi_{n,2} (x) .
$$
The same analysis applies for the case with drift, resulting in a transcendental equation for the eigenvalues, all of which are positive, depending on $b,d$. 

This case, while less biologically reasonable, highlights the importance of the geometric multiplicity of the eigenvalue. In the Dirichlet, Neumann, or Robin cases, a geometric multiplicity of one for all eigenvalues ensures that an experimental treatment with two distinct initial data (not scalar multiples) eliminate non-identifiability; for the periodic boundary condition, one requires three distinct initial data such that they do not both belong to the span of the associated eigenfunctions.

\section{Nonlinear reaction-diffusion equations and systems}\label{sec:inhomogeneous_pde}

\subsection{Nonlinear reaction-diffusion equations}

Using the approach developed in previous sections, we can treat some nonlinear equations which are linear in their parameters. For simplicity, we treat problems with diffusion and reaction only:
\begin{align}\label{eq:nonlinear_PDE}
    \begin{cases}
        u_t - d_1 \Delta u = f(x,u; B_1), \quad\quad \text{ for } \quad (x,t) \in \Omega \times (0,T), \cr 
        \mathcal{B}u = 0, \hspace{3.525cm} \text{ for } \quad (x,t) \in \partial \Omega \times (0,T) , \cr
        u(x,0) = u_0(x), \hspace{2.2cm} \text{ for } \quad x \in \overline{\Omega},
    \end{cases}
\end{align}
where $d_1 > 0$ and $f$ is a \textit{nonlinear} function depending linearly on a parameter point $B_1 \in \mathbb{R}^m$ for some $m \geq 1$. The dimensionality of the parameter $B_1$ is arbitrary in principle, but larger values of $m$ increase the difficulty of applying this approach. Any polynomial of degree two or greater will fall into this class. One such example is a logistic-growth functional response of the form
$$
 f(x,u;B_1) =  f(u,B_1) = a_1 u - b_1 u^2,
$$
where $B_1 = (a_1,b_1) \in \mathbb{R}^2$ so that $m=2$.

We identify the following for auxiliary parameter points $d$ and $B$.
\begin{align}\label{identifiable_nonlinear_R}
    \mathcal{R} = \{ (d, B) \in [0, \infty) \times \mathbb{R}^m : - d \Delta \psi = f(x, \psi; B) \text{ has a unique solution}. \}
\end{align}
We then identify the remaining possible parameter sets which may lead to non-identifiability.
\begin{align}\label{identifiable_nonlinear_A}
\mathcal{A}_{\textup{I}} =& \{ (d, B) \in [0,\infty) \times \mathbb{R}^m : - d \Delta \psi = f(x, \psi; B) \text{ has a discrete spectrum of solutions}. \} \nonumber \\
    \mathcal{A}_{\textup{NI}} = &\{ (d, B) \in [0,\infty) \times \mathbb{R}^m : - d \Delta \psi = f(x, \psi; B) \text{ has a continuous spectrum of solutions}. \}
\end{align}

We prove the following, which establishes unconditional identifiability if solutions to the auxiliary elliptic problem are discrete.
\begin{theorem}\label{thm:nonlinear_scalar}
    Suppose $u$ is a smooth solution (e.g., $C^{2,1} (\Omega\times (0,T))$, but once differentiable in space and time is sufficient) and that $u_t \not\equiv 0$. Then model \eqref{eq:nonlinear_PDE} is unconditionally identifiable in $\mathcal{R} \cup \mathcal{A}_{\textup{I}}$ as defined in \eqref{identifiable_nonlinear_R}-\eqref{identifiable_nonlinear_A}. In particular, if $\mathcal{A}_{\textup{NI}} = \emptyset$, then model \eqref{eq:nonlinear_PDE} is globally unconditionally identifiable.
\end{theorem}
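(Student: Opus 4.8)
The plan is to execute Steps 1--3 of the general framework, with the decisive new ingredient being a connectedness argument in the solution space of the auxiliary elliptic problem. First I would assume, towards a contradiction, that two distinct parameter points $(d_1,B_1)\neq(d_2,B_2)$ yield a common smooth solution $u = X[(d_1,B_1),u_0] = X[(d_2,B_2),u_0]$ of \eqref{eq:nonlinear_PDE} with $u_t \not\equiv 0$. Subtracting the two copies of the equation satisfied by $u$ and invoking linearity-in-parameters of $f$ together with the trivial linearity of the Laplacian in $d$, the time derivatives cancel and I obtain, for the auxiliary point $(d,B) := (d_1-d_2,\,B_1-B_2)$, the identity
$$
-d\,\Delta u(\cdot,t) = f\big(x,u(\cdot,t);B\big) \quad \text{in } \Omega, \qquad \mathcal{B}u(\cdot,t)=0 \text{ on } \partial\Omega,
$$
valid for every fixed $t\in(0,T)$. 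The crucial observation is that this exhibits the entire trajectory $\{u(\cdot,t):t\in(0,T)\}$ as a one-parameter family of solutions of the auxiliary elliptic boundary-value problem associated to $(d,B)$.

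Next I would convert this family into a topological statement. Since $u$ is assumed smooth (the $C^{2,1}$ regularity suffices), the map $t\mapsto u(\cdot,t)$ is continuous from the connected interval $(0,T)$ into the solution set of the auxiliary problem, equipped with a suitable function-space topology (for instance $C^2(\overline\Omega)$ or $L^2(\Omega)$). If $(d,B)\in\mathcal{R}$, the auxiliary problem admits a unique solution, so the trajectory is forced to be constant; if $(d,B)\in\mathcal{A}_{\textup{I}}$, the solution set is discrete, i.e.\ consists of isolated points, and a continuous image of a connected set inside a set of isolated points is necessarily a single point. In either case $u(\cdot,t)$ is independent of $t$, whence $u_t\equiv 0$, contradicting the hypothesis $u_t\not\equiv 0$. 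This contradiction shows that no two distinct parameter points whose difference lies in $\mathcal{R}\cup\mathcal{A}_{\textup{I}}$ can be indistinguishable, and since the argument never used the specific form of $u_0$, the identifiability obtained is unconditional in the sense of Definition \ref{def:identifiable_2}.

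For the final assertion I would note that the trichotomy unique/discrete/continuous exhausts the parameter domain, so that $[0,\infty)\times\mathbb{R}^m = \mathcal{R}\cup\mathcal{A}_{\textup{I}}\cup\mathcal{A}_{\textup{NI}}$; when $\mathcal{A}_{\textup{NI}}=\emptyset$ the set $\mathcal{R}\cup\mathcal{A}_{\textup{I}}$ is everything, giving global unconditional identifiability. The main obstacle is making the topological step rigorous: one must fix the function space in which the solutions $u(\cdot,t)$ live, confirm that $t\mapsto u(\cdot,t)$ is genuinely continuous into that space (using parabolic regularity, and continuity up to $t=0^+$ where $u(\cdot,t)\to u_0$), and---most importantly---pin down precisely what ``discrete spectrum of solutions'' is to mean, so that the solutions are \emph{topologically isolated} in the chosen space. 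Only with isolation in hand does the connectedness argument force constancy. Degenerate cases such as $d=0$ (that is, $d_1=d_2$) should also be checked separately to ensure the auxiliary problem and its solution set remain well-defined there.
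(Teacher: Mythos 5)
Your proposal is correct and takes essentially the same route as the paper's proof: subtract the two copies of the equation to obtain the auxiliary elliptic problem at each fixed time, then use uniqueness (for $\mathcal{R}$) or discreteness (for $\mathcal{A}_{\textup{I}}$) of its solution set to force $u(\cdot,t)$ to be independent of $t$, contradicting $u_t \not\equiv 0$. The only distinction is that you spell out the continuity-plus-connectedness argument (a continuous image of $(0,T)$ in a set of isolated points is a single point) that the paper invokes implicitly when it asserts the trajectory must coincide with a single fixed $\psi$ in the discrete case.
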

\begin{proof}
    Fix $u_0 \not\equiv 0$. As in the homogeneous cases, we assume to the contrary that there exists a solution $u = X[d_1, B_1, u_0] (x,t) = X[d_2, B_2, u_0] (x,t)$ for all $(x,t) \in \Omega \times (0,T)$, where the parameter points are distinct, i.e., $(d_1,B_1) \not\equiv (d_2,B_2)$. Without loss of generality, we assume $d_1 \geq d_2$. As in the homogeneous case, we take the difference of these two equations to obtain the auxiliary elliptic equation
\begin{align}\label{eq:nonlinear_aux}
    -(d_1 - d_2) \Delta u = f(x,u; B_1 - B_2), \quad \mathcal{B}u = 0 .
\end{align}
    Setting $d:= d_1 - d_2 \geq 0$, $B := B_1 - B_2$, we obtain that for any $(d,B) \in \mathcal{R}$ there necessarily holds $u(x,t) = \psi(x)$ for all $t \in (0,T)$. Different from the homogeneous case, we no longer have uniqueness \textit{up to scalar multiples}, and so we conclude that $u_t = \frac{\textup{d}}{\textup{d}t} \psi = 0$, a contradiction.

    Similarly, if $(d,B) \in \mathcal{A}_{\textup{I}}$ we find that $u(x,t) = \psi(x)$ for some $\psi$ satisfying \eqref{eq:nonlinear_aux}; since the solutions corresponding to parameter points belonging to $\mathcal{A}_{\textup{I}}$ are discrete, we conclude that $u(x,t) = \psi(x)$ for some fixed $\psi$ for all $t \in (0,T)$, and we again reach a contradiction to $u_t \not\equiv 0$.
\end{proof}
\begin{remark}
    In the proof above, we could equivalently assume that $u_0 (x) \not\equiv \psi$, where $\psi$ is a solution to the auxiliary elliptic problem \eqref{eq:nonlinear_aux}. Indeed, from the assumed continuity of the solution $u(x,t)$, we would conclude that $\lim_{t  \to 0^+} u(x,t) = \psi(x)$. A key difference between a nonlinear case and the homogeneous case is that we lose the  degree of  freedom to use scalar multiples of the solution to the elliptic problem to construct a valid solution solving the time-dependent problem. 
\end{remark}

\begin{example}\label{ex:constant_logistic}
    Consider the logistic form $f(x,u;B_1 ) = a_1 u - b_1 u^2$ introduced above. $u(x,t) \equiv 0$ is always a solution for the boundary conditions considered here. We assume that the solution $u(x,t)$ is nontrivial and non-constant in time. Set $d:= d_1 - d_2 \geq 0$, $B := (a,b) = (a_1 - a_2, b_1 - b_2)$ as in \eqref{eq:nonlinear_aux}. 

    When $d = 0$, one necessarily finds that $u \equiv a/b$, a contradiction to the assumption $u_t \not\equiv 0$.
    
    For any $d>0$, it is well-known that for $a \in \mathbb{R}$, $b > 0$, any nontrivial solution is unique, see, e.g., \cite{Pao.1992}. This follows from the concavity of the functional response (a subhomogeneity condition is sufficient; see \cite[Ch. 2]{Zhao.2017}). Therefore, any parameter combinations such that $a_1 - a_2 \in \mathbb{R}$, $b_1 - b_2 > 0$ lead to distinguishability and therefore identifiability by Theorem \ref{thm:nonlinear_scalar}.
    
For cases where $b_1 - b_2 < 0$, we appeal to symmetry of the problem. Indeed, we notice that if $w$ solves the auxiliary elliptic problem with $b<0$, then $z := - w$ satisfies the same equation with $b>0$. Consequently, if there exist multiple nontrivial solutions for parameters $(a,b) \in \mathbb{R} \times \mathbb{R}^-$, we would identify multiple solutions for parameters belonging to $(a,b) \in \mathbb{R} \times \mathbb{R}^+ \subset \mathcal{R}$, a contradiction to the uniqueness of solutions to the auxiliary elliptic problem highlighted above. In particular, $\mathcal{A}_{\textup{NI}} = \emptyset$.

Therefore, by Theorem \ref{thm:nonlinear_scalar}, model \eqref{eq:mainPDE} with the logistic growth function subject to any of the homogeneous Dirichlet, Neumann, Robin, or periodic boundary conditions is globally unconditionally identifiable in the sense of Definition \ref{def:identifiable_1} so long as there is some temporal variation in the observation, which is to say, $u_t \not\equiv 0$.
\end{example}

Sometimes we also consider spatially-dependent parameters, see for example \cite{Baym.2016, Jacobs.2024}, motivating the following example.

\begin{example}\label{ex:hetero_logistic}
    An identical analysis to that presented above applies to spatially dependent parameters. Consider $f(x,u; B_1) = m_1(x) u - b_1 u^2$ and take $\mathcal{B}$ to be the Neumann boundary operator. We now have a spatially dependent parameter $m_1(x)$, and a constant parameter $b_1 > 0$ as in Example \ref{ex:constant_logistic}. Taking the difference between two equations potentially satisfied by a non-identfiable solution $u$, we obtain the elliptic equation $-d \Delta \psi = m(x) \psi - b \psi^2$, where $m (x) := m_1(x) - m_2(x)$ and $b := b_1 - b_2$. Assuming that $b>0$ and that $m \not\equiv \textup{const.}$, any positive solution $\psi$ is unique, see \cite[Ch. 4]{Ni.2011}. If $m \equiv \textup{const}$, then $\psi \equiv b/m$ whenever $m \not\equiv 0$, otherwise $\psi \equiv 0$ is the only solution. Symmetry yields the same conclusion whenever $b<0$. In any case, we find that model \eqref{eq:nonlinear_PDE} subject to a homogeneous Neumann boundary condition with heterogeneous logistic growth $f(x,u) = m_1 (x) u - b_1 u^2$ is globally unconditionally identifiable so long as $u_t \not\equiv 0$. 
\end{example}

\subsection{Systems of reaction-diffusion equations}
We conclude with an example of a non-identifiable solution for a coupled system of reaction-diffusion equations. We do not seek to be exhaustive in this case, but instead seek to demonstrate how non-identifiability can arise. Consider the following model describing cell motility with linear diffusion \cite{Vittadello:2018}:
\begin{align}\label{eq:cell_motility_system}
\begin{cases}
        u_t - d_u \Delta u = a_{11} u - a_{12} v, \quad \text{ for } \quad (x,t) \in \Omega \times (0,T), \cr
        v_t - d_v \Delta v = - a_{21} u + a_{22} v , \quad \, \text{ for } \quad (x,t) \in \Omega \times (0,T), \cr
        \mathcal{B}u = \mathcal{B} v = 0 , \hspace{2.9cm} \text{ for } \quad (x,t) \in \partial \Omega \times (0,T),
\end{cases}
\end{align}
where $a_{ij} > 0$ are constants, $i,j=1,2$, and $\mathcal{B}$ is any of the Dirichlet, Neumann, or Robin boundary operators. We may then construct a non-identifiable solution using the theory from Section \ref{sec:homogeneous_pde} as follows. Let $u = u_i$ solve $u_t - d_i \Delta u - c_i u = 0$, $i=1,2$, where $(d_i, c_i)$ are chosen such that $u_1 \equiv u_2$ while $(d_1, c_2) \not\equiv (d_2,c_2)$. By the results of Section \ref{eq:cell_motility_system}, we know such a solution exists for any boundary operator considered here. By direct substitution, we then find that $(u,v) := ( \kappa _u u_1, \kappa_v u_1)$ with any positive constants $\kappa_u$, $\kappa_v$ will solve \eqref{eq:cell_motility_system} with
\begin{align}
\begin{cases}
d_u := d_1  ;\quad a_{11} :=  c_1 + \delta_1 ; \quad a_{12} := \delta_1 \frac{\kappa_u}{\kappa_v}, \nonumber \\
 d_v := d_2  ;\quad a_{22} :=  c_2 + \delta_2 ; \quad a_{21} := \delta_2 \frac{\kappa_v}{\kappa_u},
    \end{cases}
\end{align}
where $\delta_i>0$ are arbitrary constants. We remind readers that the restriction on $d_i,c_i$ tell us that each $c_i$ can be written in terms of $d_i$ and the eigenvalues of $-\Delta$ depending on the boundary condition chosen.

As $\delta_i$ are arbitrary but yield a consistent set of
 solutions to problem \eqref{eq:cell_motility_system}, we have obtained a continuous spectrum of non-identifiable solutions $(u,v):=(u_{(\delta_1,\delta_2)}, v_{(\delta_1,\delta_2)})$ parametrised by $(\delta_1, \delta_2)$.

\section{Ramifications for practical identifiability}\label{sec:practical}

Results in the preceding sections show that, for scalar parabolic PDEs that are fully observed, non-identifiability is in some sense pathological: models are structurally identifiable from initial conditions that do not take a very specific form. For example, results in \Cref{dirichlet} demonstrate that the linear reaction-diffusion equation with Dirichlet boundary conditions on a unit domain is structurally non-identifiable if and only if the initial condition is of the form of the dominant eigenfunction $u_0(x) = \sin(\pi x)$. Any perturbation to this initial condition will render all model parameters structurally identifiable. However, for linear models, at least, this perturbation will decay as the solution tends toward a multiple of the dominant eigenfunction. Therefore, while model parameters are structurally identifiability for almost all initial conditions, we expect to encounter practical non-identifiability for data collected from experiments initialised near the dominant eigenfunction.

\begin{figure}[!t]
	\centering
	\includegraphics{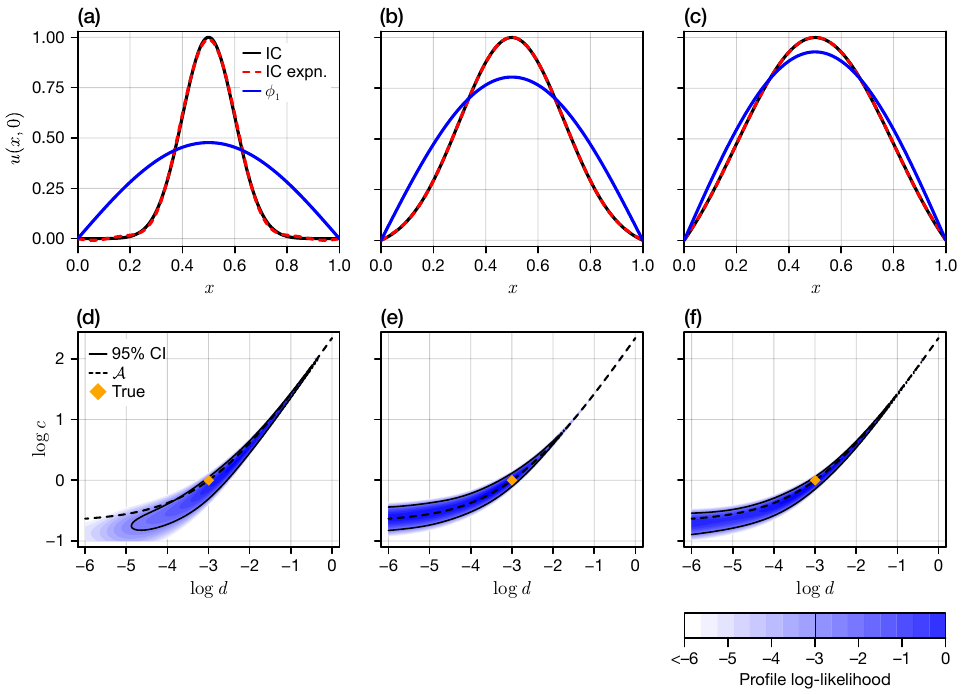}
	\caption[Figure 4]{Practical identifiability of parameters in the linear reaction-diffusion equation subject to Dirichlet boundary conditions. (a--c) Initial conditions used to produce synthetic data. Initial conditions are based on a Gaussian centred at $x = 1/2$ with standard deviations $\omega = 0.1,0.2,0.3$, translated and normalized such that $u_0(1/2) = 1$ and that the boundary conditions are satisfied (black solid). To simplify the analysis, the initial conditions used are constructed from a truncated eigenfunction expansion of each Gaussian (red dashed). The leading order term is shown in blue. (d--f) Bivariate profile log-likelihoods for each initial condition, normalized to a maximum of zero. A 95\% confidence region is calculated based on a normalized log-likelihood threshold of approximately 2.997, arising from the likelihood-ratio test. We also show the true value (orange diamond) and the theoretically indistinguishable set for an initial condition exactly matching the dominant eigenfunction (black dashed).}
	\label{fig4}
\end{figure}

To demonstrate the relationship between initial condition and practical identifiability, we consider a likelihood-based inference to recover parameters $A_1 = (c,d) = (1,0.05)$ in \cref{eq:mainPDE} subject to Dirichlet boundary conditions on a unit domain with $b = 0$. We additionally assume that the initial condition is to be inferred, and is of the form
	\begin{equation}\label{practical_ic}
		u(x,0) = \sum_{n=1}^N C_n \, \phi_n(x),
	\end{equation}
where $\phi_n(x)$ are the eigenfunctions (\cref{dirichlet_efuns}) and $C_n$ the (unknown) coefficients in the partial eigenexpansion for the initial condition truncated at $N = 8$. This choice of initial condition also allows us to avoid issues relating to a numerical discretisation with a diffusion coefficient that  potentially  varies across several orders of magnitude, as the coupled problem given through \cref{eq:mainPDE,practical_ic} has an analytical solution of the form
	\begin{equation}
		u(x,t) = \sum_{n=1}^N C_n \mathrm{e}^{\lambda_n t} \phi_n(x),
	\end{equation}
with the corresponding eigenvalues given by \cref{dirichlet_efuns}. We choose $u(x,0)$ (and hence the parameters $\{C_n\}_{n=0}^N$) based on the truncated eigenfunction expansion of a Gaussian density function centered at $x = 1/2$ with standard deviation $\omega$ and both scaled and translated such that $u(x,0) = 1$ and that the boundary conditions are satisfied (\cref{fig4}a--c). Varying $\omega \in \{0.1,0.2,0.3\}$ allows us to create initial conditions that are very similar to the leading eigenfunction (\cref{fig4}c) and that are very different (\cref{fig4}a). 

We consider a spatiotemporal dataset collected at $x = x_i \in \{0,0.1,\cdots,1\}$ and $t  = t_j \in \{0,0.1,\cdots,2\}$. We further assume that observations, denoted by $$y(x_i,t_j) = u(x_i,t_j) + \varepsilon(x_i,t_j),$$ are subject to additive Gaussian noise with covariance function
	\begin{equation}
		\mathrm{cov}\big(\varepsilon(x_{i_1},t_{j_1}), \varepsilon(x_{i_2},t_{j_2})\big) = \left\{\begin{array}{ll}
				\sigma^2 \mathrm{e}^{-\eta |x_{i_1} - x_{i_2}|} & t_{j_1} = t_{j_2},\\
				0 & t_{j_1} \neq t_{j_2},
		\end{array}\right.
	\end{equation}
where $\eta$ and $\sigma^2$ (both to be inferred) determine the strength of the spatial correlation and the overall noise variance, respectively, and are set to the known parameters $(\eta,\sigma) = (10,0.3)$.

For synthetic datasets generated using each of the initial conditions in \cref{fig4}a--c, we perform a two-dimensional profile likelihood analysis to establish the practical identifiability of the model parameters $(c,d)$ \cite{Pawitan.2013}. This analysis involves maximizing the likelihood over the remaining parameters (in our case, the coefficients relating to the initial condition $\{C_n\}_{n=0}^N$) while the parameters of interest remain fixed. An approximate 95\% confidence region can then be constructed based upon a comparison to the maximum likelihood estimate through a likelihood ratio test. Results in \cref{fig4}d--f show that, for all initial conditions, the 95\% confidence region follows a shape predicted by the indistinguishable set $\mathcal{A}$, corresponding to a constant dominant eigenvalue. In particular, parameters inferred from initial conditions constructed with $\omega = 0.2$ and $0.3$ are \textit{practically non-identifiable}: they cannot be identified to within several orders of magnitude. In the case that $\omega = 0.1$, we can establish a lower bound and we observe in \cref{fig4} that the 95\% confidence region is contained entirely within the domain shown. In all cases, we observe a confidence region that narrows significantly around $\mathcal{A}$ for large $d$. We highlight that this does not necessarily imply that large $d$ regimes are distinguishable, but rather that the model becomes increasingly sensitive to changes in $c$ as $d \rightarrow \infty$.

\section{Discussion}\label{sec:discussion}

Despite the ever-increasing prevalence of spatiotemporal data, tools for the structural identifiability analysis of the necessitated PDE models remain in their infancy. Recent work extends the well-established differential algebra approach to both fully and partially observed PDE models, enabling, for the first time, a very general assessment of identifiability in situations where the state-space is, in some sense, adequately explored. However, many considerations in the application of PDEs differ fundamentally from that of ODE models: potentially most pertinent of which are limitations relating to boundary conditions and spatially inhomogeneous initial conditions arising from experimental or practical constraints. In this work, we present a new perspective by studying initial-condition-dependent structural identifiability in a class of fully observed parabolic equations that are linear in their parameters. Our direct approach is based upon studying the existence and uniqueness of solutions of an auxiliary elliptic equation, for which we draw on a rich body of theory that had remained disconnected from the question of structural identifiability.

For scalar linear parabolic models, we demonstrate that structural non-identifiability is in some sense a pathological consequence of an initial condition that belongs to the kernel of the elliptic operator. Therefore, these models always retain so-called multi-experiment identifiability provided that at least one initial condition that is not a scalar multiple of another is considered. Furthermore, since the set of  indistinguishable parameter points, $\mathcal{A}$, is empty in cases of identifiability, we can also say that the scalar linear parabolic models are \textit{globally identifiable}. Indeed, the differential algebra approach, through the coefficients of a monic polynomial in the state variable and its derivatives, will indicate that these models are globally identifiable. Underlying this conclusion is that the constituent monomials are linearly independent. This can be held, in general, valid for PDE models; however, it is clear from the auxiliary elliptic equation that the state variable and its spatial derivatives will be linearly \textit{dependent} in the constrained state space arising from a non-identifiable initial condition. Consequentially, any perturbation to this initial condition (including through a spatially inhomogeneous input) will render the model identifiable. However, as we investigate in \cref{sec:practical}, the models may remain \textit{practically} non-identifiable if the associated eigenvalue is dominant (in which case the solution converges back to the non-identifiable condition), and potentially otherwise if the solution is not observed to diverge sufficiently.

On the other hand, the scalar nonlinear parabolic equation we study is unconditionally identifiable, subject to a spatially inhomogeneous initial condition. Established formally in \cref{sec:inhomogeneous_pde}, this result can also be viewed as a direct consequence of multi-experiment identifiability in linear equations by considering a suite of linearisations of the nonlinear model. Each ``instance'', say $i$, will yield an (identifiable) eigenvalue that characterises $\mathcal{A}_i$, the intersection of each being empty, thereby indicating that the model is unconditionally identifiable. Practically, however, parameter points may be indistinguishable when experimental noise is considered, particularly if the eigenvalues are similar; in the logistic model, for instance, this will be the case if the carrying capacity is sufficiently high. Identifiability for linear systems, on the other hand, corresponds to results for linear scalar systems. As we demonstrate, particular choices for the initial condition in a two-species cell motility model may result in indistinguishability: for such (hypothetical) experiments, it is impossible to distinguish between diffusion, growth, and exchange of the populations. In particular, we observe that there exists a continuous spectrum of solutions parametrised by $(\delta_1, \delta_2) \in \mathbb{R}^2$. 

Overall, we present a new perspective to identifiability analysis of a class of relatively simple PDE models that draws on well-established techniques for more standard analysis of PDEs. Importantly, our direct approach is relatively accessible and not only establishes identifiability, but provides critical information about how identifiability can be lost for particular choices of boundary and initial conditions. We have restricted the present work to cases where the system is fully observed and where the initial and boundary conditions are specified and parameter-independent. These scenarios lead to somewhat pathological losses of identifiability that nevertheless have important practical ramifications (see \cref{sec:practical}). For scalar linear parabolic equations, our description is relatively complete; however, there are many clear avenues left open for future consideration that we detail below.

\subsection{Future directions}

First, we have consistently made the idealistic assumption that one has complete (i.e., spatially and temporally continuous) observation of the solution profile and that parameters are constant. In practice, this is not often the case. It is, therefore, natural to consider scenarios where data is only observed at discrete coordinates in space or time, especially for cases where we do not have observations that are fine-grain enough in time to conclude that $u_t \not \equiv 0$. For example, there may be scenarios where the spatial profile is observed to be approximately constant, but where the solution behaviour changes rapidly between observation coordinates. This introduces additional challenges even if the spectrum of the auxiliary elliptic problem is discrete. Similarly, experiments often incorporate spatial heterogeneity in the environment or substrate, such that parameters (known or unknown) can be viewed as functions of space or time; for example, spatial variation in antibiotic concentration in \cite{Baym.2016}. Our approach is applicable to such spatially inhomogeneous problems, at least when the heterogeneity appears at a lower order (e.g., in growth for linear equations or in the carrying capacity for reaction-diffusion equations). The technique remains unchanged, but care should be given to the different form of the auxiliary eigenvalue problem one obtains.

Secondly, we have assumed that the boundary and initial conditions are known and parameter-independent. In some cases, the boundary operator $\mathcal{B}$ could depend parameters, as analysed in \cite{Renardy.2022}. Should the boundary condition also remain linear in parameters, one could directly apply our methodology, though we save such considerations for future work.

Thirdly, we have explored briefly how a nonlinear reaction term can lead to unconditional global identifiability. In contrast with homogeneous linear equations, this is preferred from a practical point of view. In particular, it may be possible to generalise our results to a wider class of nonlinear reaction-diffusion equations, which we expect to yield unconditional identifiability in cases that the initial condition is inhomogeneous.

Finally, we have briefly touched on implications for systems of equations, particularly those which are of reaction-diffusion type. This is primarily to keep exposition at a reasonable length; further development of this approach to more general reaction-diffusion systems (e.g., competitive, cooperative, or predator-prey models) would be an interesting further development. This approach should allow one to fully classify initial data from which two-species systems may lose identifiability. The complexity of our direct approach will, however, increase with the number of parameters which (even if still linear) will increase the number of cases to consider. We leave such considerations for future work.


\section*{Data availability}

Code used to produce the results are available at \url{https://github.com/ap-browning/parabolic_pde_identifiability}.

\section*{Acknowledgements}

YS was supported by a NSERC Postdoctoral Fellowship (NSERC Grant PDF-578181-2023). APB thanks the Mathematical Institute, University of Oxford, for a Hooke Research Fellowship. The authors thank Torkel Loman, Domenic Germano, and the anonymous referee, for helpful comments.

{\footnotesize

}
 
\end{document}